\newtheorem{theorem}{Theorem}
\newtheorem{lemma}[theorem]{Lemma}
\newtheorem{proposition}[theorem]{Proposition}
\theoremstyle{remark}\newtheorem{remark}[theorem]{Remark}
\theoremstyle{example}
\newcommand\CC{{\mathbb C}}
\newcommand\RR{{\mathbb R}}
\newcommand\DD{{\mathbb D}}
\newcommand\BB{{\mathbb B_n}}
\begin{document}
	\title[cyclic vectors in Dirichlet-type spaces]{A note on cyclic vectors in Dirichlet-type spaces in the unit ball of $\CC^n$}
	\author{Dimitrios Vavitsas}\email{dimitris.vavitsas@doctoral.uj.edu.pl}
	\address{Institute of Mathematics, Faculty of Mathematics and Computer Science, Jagiellonian University, \L ojasiewicza 6, 30-348 Krak\'ow, Poland}
	\thanks{Partially supported by NCN grant SONATA BIS no.  2017/26/E/ST1/00723 of the National Science Centre, Poland}
	\subjclass{31C25, 32A37, 47A15}
	\keywords{Dirichlet-type spaces, cyclic vectors, anisotropic capacities}
	
	\begin{abstract}
		We characterize model polynomials that are cyclic in Dirichlet-type spaces in the unit ball of $\mathbb C^n,$ and we give a sufficient capacity condition in order to identify non-cyclic vectors.
	\end{abstract}

	\maketitle
	\section{Introduction}
	
	Studying Dirichlet-type spaces in the unit ball of $\CC^n$ we can draw conclusions for classical Hilbert spaces of holomorphic functions such as the Hardy, Bergman and Dirichlet spaces. General introduction to this theory can be found in \cite{Rudin ball}, \cite{Zhu}.  
	
	The purpose of this note is to characterize model polynomials and to study special families of functions that are cyclic for the shift operators on these spaces. Moreover, we give a sufficient capacity condition in order to identify non-cyclic functions. Norm comparisons, sharp decay of norms for special subspaces, capacity conditions studied in \cite{Extrem polyn}, \cite{Extrem polyn II}, \cite{Ber}, \cite{Sola} are the main motivation for this work. The cyclicity of a function $f$ in a space of holomorphic functions is connected also with the problem of approximating $1/f$, see \cite{Sargent Sola}, \cite{Sargent Sola II} for the study of this subject.
	
	Full characterization of polynomials in more than two variables looks like a hard problem either in the unit ball or the polydisc. The cyclicity problem of polynomials for the bidisk was solved in \cite{Benetau} and shortly after extended in \cite{Knese}. The corresponding problem in the setting of the unit ball of $\CC^2$ was solved in \cite{Kosinski Vav}.
	
	\subsection{Dirichlet-type spaces in the unit ball}
	Denote the unit ball by
	$$\BB=\{z\in \CC^n:||z||<1\},$$
	and its boundary, the unit sphere by
	$$\mathbb S_n=\{z\in \CC^n:||z||=1\},$$
	where $||z||=\sqrt{|z_1|^2+...+|z_n|^2}$ is the associated norm of the usual \emph{Euclidean inner product} $\langle z,w \rangle=z_1\bar{w}_1+...+z_n\bar{w}_n.$ 
	Denote the \emph{class of holomorphic functions} in $\BB$ by $\textrm{Hol}(\BB).$ Any function $f\in \textrm{Hol}(\BB)$ has a power series expansion
	\begin{equation}\label{uniq power expansion in Reinh domain}
		f(z)=\sum_{k=0}^{\infty}a_kz^k=\sum_{k_1=0}^{\infty}... \sum_{k_n=0}^{\infty}a_{k_1,...,k_n}z_1^{k_1}\cdots z_n^{k_n}, \quad z\in \BB,
	\end{equation}
	where $k=(k_1,...,k_n)$ is a n-tuple index of non-negative integers,  $k!=k_1!\cdots k_n!$ and $z^k=z_1^{k_1}\cdots z_n^{k_n}.$
	The power series in ~\eqref{uniq power expansion in Reinh domain} exist, converges normal in $\BB$ and it is unique since the unit ball is a connected Reinhardt domain containing the origin, i.e. $(z_1,...,z_n)\in \BB$ implies $(e^{i\theta_1}z_1,...,e^{i\theta_n}z_n)\in \BB$ for arbitrary real $\theta_1,...,\theta_n,$ (see \cite{Hormander}).
	
	To simplify the notation we may write \eqref{uniq power expansion in Reinh domain} as follows:
	\begin{equation}
		f(z)=\sum_{m=0}^{\infty}\sum_{|k|=m}^{\infty}a_kz^k=\sum_{|k|=0}^{\infty}a_kz^k, \quad z\in \BB,
	\end{equation} 
	where $|k|=k_1+...+k_n.$
	
	Let $f\in \mathrm{Hol}(\BB)$. We say that $f$ belongs to the $\emph{Dirichlet-type}$ $space$ $D_{\alpha}(\BB),$ where $\alpha\in \mathbb{R}$ is a fixed parameter, if
	\begin{equation}\label{norm with sum}
		||f||^2_\alpha:=\sum_{|k|=0}^{\infty}(n+|k|)^\alpha\frac{(n-1)!k!}{(n-1+|k|)!}|a_{k}|^2<\infty.
	\end{equation}
	
	General introduction to the theory of Dirichlet-type spaces in the unit ball of $\CC^n$ can be found in \cite{Ahern}, \cite{Beatrous}, \cite{Li}, \cite{Michalska}, \cite{Sargent Sola}, \cite{Sola}, \cite{Zhu}. One variable Dirichlet-type spaces are discussed in the textbook \cite{Fallah-Tomas}. The weights in the norm in ~\eqref{norm with sum} are chosen in such a way that $D_0(\BB)$ and $D_{-1}(\BB)$ coincide with the $\emph{Hardy}$ and $\emph{Bergman}$ $\emph{spaces}$ of the ball, respectively. The $\emph{Dirichlet space}$ having M\"{o}bius invariant norm corresponds to the parameter choice $\alpha=n.$
	
	By the definition, $D_\alpha(\BB)\subset D_\beta(\BB),$ when $\alpha\geq \beta.$  Polynomials are dense in the spaces $D_{\alpha}(\BB),$ $\alpha \in \RR,$ and $z_i\cdot f\in D_\alpha(\BB),$ $i=1,...,n$ whenever $f\in D_\alpha(\BB).$ 
	
	A \emph{multiplier} in $D_\alpha(\BB)$ is a holomorphic function $\phi:\BB\rightarrow\CC$ that satisfies $\phi\cdot f\in D_\alpha(\BB)$ for all $f\in D_\alpha(\BB).$ Polynomials, as well as holomorphic functions in a neighbourhood of the closed unit ball, are multipliers in every space $D_\alpha(\BB)$. 
	
	\subsection{Shift operators and cyclic vectors}
	Consider the bounded linear operators $S_1,...,S_n:D_{\alpha}(\BB)\rightarrow D_{\alpha}(\BB)$ defined by $S_i:f\mapsto z_i\cdot f.$ We say that $f\in D_\alpha(\BB)$ is a \emph{cyclic vector} if the closed invariant subspace, i.e.
	$$[f]:=\mathrm{clos}\, \mathrm{span}\{z_1^{k_1}\cdots z_n^{k_n}f:k_1,...,k_n=0,1,...\}$$
	coincides with $D_\alpha(\BB)$ (the closure is taken with respect to the $D_\alpha(\BB)$ norm). An equivalent definition is that $f$ is cyclic if and only if $1\in [f].$
	
	Since $D_\alpha(\BB)$ enjoys the \emph{bounded point evaluation property} a function that is cyclic cannot vanish inside the unit ball. Thus, we focus on functions non-vanishing in the domain. Also, non-zero constant functions are cyclic in every space $D_\alpha(\BB).$ More information regarding cyclic vectors in Dirichlet-type spaces over the disk, the polydisc and the unit ball can be found in \cite{Extrem polyn}, \cite{Extrem polyn II}, \cite{Benetau}, \cite{Ber}, \cite{Brown Shields},  \cite{Fallah-Tomas}, \cite{Knese}, \cite{Kosinski Vav}, \cite{Sargent Sola II}, \cite{Sola}.
	
	Just as in the settings of the bidisk and the unit ball of two variables, the cyclicity of a function $f\in D_\alpha(\BB)$ is inextricably linked with its $\emph{zero set}$
	$$\mathcal{Z}(f)=\{z\in \CC^n:f(z)=0\}.$$
	The zeros of a function lying on the sphere are called the \emph{boundary zeros}.
	
	\subsection{Plan of the paper}
	Section~\ref{section Relation among} studies Dirichlet-type spaces. In particular, we give a crucial relation among them. Using fractional radial derivatives and the Cauchy formula of functions lying in the $\emph{ball algebra}$ $A(\BB)$ which contains functions that are continuous on the closed unit ball and holomorphic in its interior, we give an equivalent norm of Dirichlet-type spaces for a wide range of parameters $\alpha.$
	
	Section~\ref{section diagonal} studies diagonal subspaces. In particular, we extend result from \cite{Sola}. It makes sense to define functions $f\in \mathrm{Hol}(\BB)$ using functions $\tilde{f}\in \mathrm{Hol}(\DD(\mu))$ for a proper $\mu>0.$ Geometrically speaking, we are looking at a disk embedded in the ball but not in a coordinate plane. Thus, we may switch the problem of cyclicity from the ball to spaces of holomorphic functions of one variable that are well known. Then we use optimal approximants in order to identify cyclicity.
	
	Moreover, we prove cyclicity for model polynomials for proper parameters. In the setting of the unit ball of two variables, see \cite{Sola}, the model polynomials are the following: $1-z_1$ which vanishes in the closed unit ball on a singleton, i.e. $\mathcal{Z}(1-z_1)\cap \mathbb S_2=\{(1,0)\},$ and $1-2z_1z_2$ which vanishes along an analytic curve, i.e. $\mathcal{Z}(1-2z_1z_2)\cap \mathbb S_2=\{(e^{i\theta}/\sqrt{2},e^{-i\theta}/\sqrt{2}):\theta\in \RR\}.$ In our case, the corresponding candidates are the following:
	$$p(z)=1-m^{m/2}z_1\cdots z_m, \quad 1\leq m\leq n.$$
	They vanish in the closed unit ball along the following analytic sets:
	$$\mathcal{Z}(p)\cap \mathbb{S}_n=\{1/\sqrt{m}(e^{i\theta_1},..,e^{i\theta_{m-1}}, e^{-i(\theta_1+...+\theta_{m-1})},0,..,0):\theta_i\in \mathbb{R}\}.$$
	These polynomials are also studied with respect to the Drury-Arveson space in \cite{Sargent Sola}.
	
	In two variables, $1-z_1$ is cyclic in $D_\alpha(\mathbb B_2)$ precisely when $\alpha\leq 2,$ and $1-2z_1z_2$ is cyclic in $D_\alpha(\mathbb B_2)$ precisely when $\alpha\leq 3/2.$ Here, there are more than two fixed parameters. The characterization of cyclicity of these two polynomials was crucial in \cite{Kosinski Vav}.
	
	Section~\ref{section radial} studies the radial dilation of a polynomial. Using the equivalent norm of Section~\ref{section Relation among}, we identify cyclicity for the model polynomials via the powerful radial dilation method. In particular, we show that if $p/p_r\rightarrow 1$ weakly, where $p_r(z)=p(rz)$ is a radial dilation of $p,$ then $p$ is cyclic, (see \cite{Knese} for the bidisk settings and \cite{Kosinski Vav} for the unit ball in two variables). This method is quite interesting since it can be applied to an arbitrary polynomial. Note that in \cite{Knese}, \cite{Kosinski Vav}  the radial dilation method is one of the main tools of solving cyclicity problem for polynomials. The main result of this section verifies the arguments made about polynomials in Section~\ref{section diagonal}.
	
	Section~\ref{section capacity} studies non-cyclic vectors. We use the notion of Riesz $\alpha$-capacity in order to identify non-cyclic functions. Moreover, we study Cauchy transforms of Borel measures supported on zero sets of the radial limits of a given function $f\in D_\alpha(\BB)$ and we give asymptotic expansions of their norms. Then employing a standard scheme due to Brown and Shields, see \cite{Brown Shields}, we prove the main result. Note that this sufficient capacity condition for non-cyclicity in Dirichlet-type spaces in the unit ball of two variables was proved by A. Sola in \cite{Sola}.
	
	\section*{Standard tools}
	
	Let us give some standard tools which will be useful in the sequel.
	
	The binomial series: 
	$$\frac{1}{(1-x)^{\alpha}}=\sum_{k=0}^{\infty}\frac{\Gamma(k+\alpha)}{\Gamma(\alpha)k!}x^k,$$
	where $|x|<1$ is a complex number and $\alpha$ is a non-negative real number. The asymptotic behaviour of the $\Gamma$-function is the following: $\Gamma(k+\alpha)\asymp (k-1)!k^\alpha,$ where the symbol $\asymp$ denotes that the ratio of the two quantities either tends to a constant as $k$ tends to infinity or it is rather two sides bound by constants.
	
	The multinomial formula:
	$$(x_1+...+x_n)^k=\sum_{|j|=k}\frac{k!}{j!}x_1^{j_1}\cdots x_n^{j_n},$$
	where $j=(j_1,...,j_n)$ is a $n$-tuple index of non-negative integers and $x_i$ are complex numbers.
	
	The Stirling formula that describes the asymptotic behaviour of the gamma function:
	$$k!\asymp k^{1/2}k^k/e^k.$$
	
	Denote the normalized area measure on $\CC^n=\RR^{2n}$ by $du(z)$ and the normalized rotation-invariant positive Borel measure on $\mathbb S_n$ by $d\sigma(\zeta),$ (see \cite{Rudin ball}, \cite{Zhu}). The measures $du(z)$ and $d\sigma(\zeta)$ are related by the formula
	$$\int_{\CC^n}f(z)du(z)=2n\int_{0}^{\infty}\int_{\mathbb S_n}\epsilon^{2n-1}f(\epsilon\zeta)d\sigma(\zeta)d\epsilon.$$
	
	The holomorphic monomials are orthogonal to each other in $L^2(\sigma),$ that is, if $k$ and $l$ are multi-indices such that $k\neq l,$ then 
	$$ \int_{\mathbb S_n}\zeta^k \bar{\zeta}^l d\sigma(\zeta)=0.$$
	Moreover,
	$$\int_{\mathbb S_n}|\zeta^k|^2d\sigma(\zeta)=\frac{(n-1)!k!}{(n-1+|k|)!} \quad \text{and} \quad  \int_{\BB}|z^k|^2du(z)=\frac{n!k!}{(n+|k|)!}.$$
	
	\section{Relation among Dirichlet-type spaces and equivalent norms}\label{section Relation among}
	
	We study the structure of Dirichlet-type spaces. Note that
	$$R(f)(z)=z_1\partial_{z_1}f(z)+...+z_n\partial_{z_n}f(z)$$
	is the \emph{radial derivative} of a function $f.$ The radial derivative plays a key role in the function theory of the unit ball. A crucial relation among these spaces is the following.
	\begin{proposition}\label{Prop relation among D.S.}
		Let $f\in \mathrm{Hol}(\BB)$ and $\alpha\in \RR$ be fixed. Then
		\begin{equation*}
			f\in D_\alpha(\BB)\quad  \text{if and only if} \quad  n^q f+R^q (f)+q\sum_{i=1}^{q-1}n^iR^{q-i}(f)\in D_{\upsilon}(\BB),
		\end{equation*}
		where $\alpha=2q+\upsilon,$ $q\in \mathbb N$ and $R^q$ is the $q$-image of the operator $R.$
		\begin{proof}
			Indeed, it is enough to check that
			$$||nf+R(f)||^2_{\alpha-2}=\sum_{|k|=0}^{\infty}(n+|k|)^{\alpha-2}\frac{(n-1)!k!}{(n-1+|k|)!}(n+|k|)^2|a_{k}|^2=||f||_{\alpha}^2.$$
		\end{proof}
	\end{proposition}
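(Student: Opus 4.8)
The plan is to exploit the fact that the radial derivative acts diagonally on the monomial basis. Since $R(z^k)=|k|z^k$, the operator $R^q$ multiplies the $k$-th Taylor coefficient $a_k$ of $f$ by $|k|^q$. The first step is to recognize the combination in the statement as a single operator: because the scalar $n$ commutes with $R$, the binomial theorem yields
\begin{equation*}
(n+R)^q=\sum_{i=0}^{q}\binom{q}{i}n^iR^{q-i},
\end{equation*}
and isolating the extreme terms $i=q$ and $i=0$ shows that $n^qf+R^q(f)+\sum_{i=1}^{q-1}\binom{q}{i}n^iR^{q-i}(f)=(n+R)^q f$. Consequently this combination multiplies each coefficient $a_k$ by $(n+|k|)^q$, and in particular it is again holomorphic on $\BB$.

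The second step is a direct norm computation. Writing $\upsilon=\alpha-2q$ and using the definition \eqref{norm with sum},
\begin{align*}
\bigl\|(n+R)^q f\bigr\|_\upsilon^2 &=\sum_{|k|=0}^{\infty}(n+|k|)^{\upsilon}\frac{(n-1)!k!}{(n-1+|k|)!}\,(n+|k|)^{2q}|a_k|^2\\
&=\sum_{|k|=0}^{\infty}(n+|k|)^{\alpha}\frac{(n-1)!k!}{(n-1+|k|)!}\,|a_k|^2=\|f\|_\alpha^2.
\end{align*}
Since $(n+R)^q f\in\mathrm{Hol}(\BB)$, the equality of the two expressions shows that $\|f\|_\alpha<\infty$ if and only if $\bigl\|(n+R)^q f\bigr\|_\upsilon<\infty$, which is precisely the claimed equivalence.

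Alternatively, one may avoid the explicit multinomial bookkeeping by induction on $q$: the base case $q=1$ is the displayed identity $\|nf+R(f)\|_{\alpha-2}^2=\|f\|_\alpha^2$, and applying it to the function $(n+R)f\in D_{\alpha-2}(\BB)$ in place of $f$ advances the parameter by two units and raises the power of $(n+R)$ by one. I expect the only point requiring care to be the bookkeeping that identifies the stated sum with $(n+R)^q$ (equivalently, that the interior coefficients are the binomial numbers $\binom{q}{i}$ produced by repeated application of the map $g\mapsto ng+R(g)$). Once the operator is correctly identified, no genuine obstacle remains, since $R$ being diagonal reduces the whole assertion to the scalar identity $(n+|k|)^{\upsilon}(n+|k|)^{2q}=(n+|k|)^{\alpha}$.
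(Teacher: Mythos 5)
Your proof is correct and takes essentially the same route as the paper's: the paper simply checks the one-step identity $\|nf+R(f)\|_{\alpha-2}^2=\|f\|_\alpha^2$ and iterates, which is exactly your observation that $(n+R)^q$ acts diagonally, multiplying $a_k$ by $(n+|k|)^q$. One caveat on the point you flagged: the proposition as printed has interior coefficients $q$ rather than your $\binom{q}{i}$, and these agree only for $q\le 3$, so for $q\ge 4$ the stated operator is not literally $(n+R)^q$ and your exact norm identity fails for it; the if-and-only-if nevertheless survives because $n^q+|k|^q+q\sum_{i=1}^{q-1}n^i|k|^{q-i}\asymp (n+|k|)^q$ with constants depending only on $q$ and $n$, a comparability remark worth adding to make the argument cover the statement verbatim.
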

	
	We continue by giving an equivalent characterization of Dirichlet-type norms. In Dirichlet-type spaces in the unit ball, one of the integral representations of the norm is achieved in a limited range of parameters.
	\begin{lemma}[see\cite{Michalska}]\label{le: equivalent int norm}
		If $\alpha\in (-1,1)$, then $||f||^2_\alpha$ is equivalent to $$|f|^2_\alpha:=\int_{\BB} \frac{||\nabla(f)(z)||^2 - |R(f)(z)|^2}{ (1-||z||^2)^\alpha} du(z).$$
	\end{lemma}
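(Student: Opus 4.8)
The plan is to prove the equivalence by a coefficient-by-coefficient comparison: expand $f=\sum_k a_k z^k$, evaluate the integral seminorm $|f|^2_\alpha$ on this expansion, and match the resulting series against the one defining $\|f\|^2_\alpha$. First I would observe that the weight $(1-\|z\|^2)^{-\alpha}$ depends only on $\|z\|$, so the measure $(1-\|z\|^2)^{-\alpha}du(z)$ is invariant under the torus action $z\mapsto(e^{i\theta_1}z_1,\dots,e^{i\theta_n}z_n)$; hence distinct monomials stay orthogonal in the associated weighted $L^2$, and both $\int_{\BB}\|\nabla f\|^2(1-\|z\|^2)^{-\alpha}du$ and $\int_{\BB}|R(f)|^2(1-\|z\|^2)^{-\alpha}du$ collapse to diagonal sums in $|a_k|^2$. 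The derivatives act trivially on the basis: $\partial_{z_i}z^k=k_i z^{k-e_i}$, with $e_i$ the multi-index carrying a $1$ in the $i$th slot, and $R(z^k)=|k|z^k$, so the gradient term produces the weights $\sum_i k_i^2$ on $|z^{k-e_i}|^2$ while the radial term produces $|k|^2$ on $|z^k|^2$.

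The one genuinely analytic ingredient is the weighted monomial integral. Passing to polar coordinates through the stated relation between $du$ and $d\sigma$, using $\int_{\mathbb{S}_n}|\zeta^m|^2\,d\sigma=\frac{(n-1)!\,m!}{(n-1+|m|)!}$, and substituting $t=\|z\|^2$ reduces it to a Beta integral, giving
\[
\int_{\BB}|z^m|^2(1-\|z\|^2)^{-\alpha}\,du(z)=\frac{\Gamma(1-\alpha)\,n!\,m!}{\Gamma(n+1-\alpha+|m|)},
\]
where the radial factor $\int_0^1 t^{\,n-1+|m|}(1-t)^{-\alpha}\,dt$ converges precisely because $\alpha<1$; this is exactly where the upper endpoint of the stated range is used (the lower endpoint lies safely inside). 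Substituting this into the two diagonal sums, using $(k-e_i)!=k!/k_i$, $\sum_i k_i=|k|$, and the recursion $\Gamma(x+1)=x\Gamma(x)$, the gradient sum contributes $|k|\,\frac{\Gamma(1-\alpha)\,n!\,k!}{\Gamma(n-\alpha+|k|)}$ to $|a_k|^2$ and the radial sum contributes $\frac{|k|^2}{n-\alpha+|k|}\cdot\frac{\Gamma(1-\alpha)\,n!\,k!}{\Gamma(n-\alpha+|k|)}$; their difference telescopes to the clean coefficient
\[
C_k=\frac{\Gamma(1-\alpha)\,n!\,k!\,|k|\,(n-\alpha)}{\Gamma(n+1-\alpha+|k|)}.
\]

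It then remains to compare $C_k$ with the coefficient $D_k=(n+|k|)^\alpha\frac{(n-1)!\,k!}{\Gamma(n+|k|)}$ appearing in $\|f\|^2_\alpha$. Cancelling the common $k!$ and a factor $n$ leaves $C_k/D_k=\Gamma(1-\alpha)(n-\alpha)\,n\,|k|\,\frac{\Gamma(n+|k|)}{\Gamma(n+1-\alpha+|k|)\,(n+|k|)^\alpha}$, and the $\Gamma$-asymptotic $\Gamma(k+a)\asymp(k-1)!\,k^a$ recorded in the preliminaries gives $\frac{\Gamma(n+|k|)}{\Gamma(n+1-\alpha+|k|)}\asymp|k|^{\alpha-1}$ and $(n+|k|)^\alpha\asymp|k|^\alpha$, so the ratio tends to the constant $\Gamma(1-\alpha)(n-\alpha)n$, which is positive since $\alpha<1\le n$. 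Thus $C_k/D_k$ is bounded above and below by positive constants for all $k\neq0$, which is the asserted equivalence. The point needing care is the constant term: since $C_0=0$ while $D_0=n^\alpha$, the seminorm $|f|_\alpha$ annihilates constants, so the sharp statement is $\|f\|^2_\alpha\asymp|f|^2_\alpha+|a_0|^2$. I expect the only real obstacle to be the multi-index factorial bookkeeping that produces the cancellation yielding $C_k$, together with verifying that the two-sided bound on $C_k/D_k$ is uniform in $k$ (including the low-order indices), not merely asymptotic.
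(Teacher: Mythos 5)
Your proposal is correct, but note that the paper contains no proof of this lemma to compare against: it is imported verbatim from \cite{Michalska}, so your argument replaces an external citation with a self-contained computation. I checked the details: torus invariance of $(1-\|z\|^2)^{-\alpha}du$ does diagonalize both quadratic forms; the Beta-integral evaluation $\int_{\BB}|z^m|^2(1-\|z\|^2)^{-\alpha}du(z)=\Gamma(1-\alpha)\,n!\,m!/\Gamma(n+1-\alpha+|m|)$ is exact; the bookkeeping $(k-e_i)!=k!/k_i$, $\sum_i k_i\,k!=|k|\,k!$, and $\Gamma(x+1)=x\Gamma(x)$ does telescope to your $C_k$; and since $C_k/D_k$ depends only on $|k|$, is positive and finite for every $|k|\geq 1$, and tends to $\Gamma(1-\alpha)(n-\alpha)n>0$, the uniform two-sided bound you flagged as needing care is automatic --- no separate low-order check is required. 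Your caveat about constants is also genuinely right: $|\cdot|_\alpha$ annihilates constants, so the lemma as literally stated fails for $f\equiv 1$, and the sharp statement is $\|f\|^2_\alpha\asymp |f(0)|^2+|f|^2_\alpha$; the paper's phrasing is the customary abuse and is harmless where the lemma is invoked (e.g.\ in Theorem~\ref{equivalent asym norm with cauchy formula}). One step of yours deserves an explicit sentence: when both $\int_{\BB}\|\nabla f\|^2 w\,du$ and $\int_{\BB}|Rf|^2 w\,du$ diverge, you cannot subtract two divergent diagonal sums termwise. The repair is to apply Tonelli to the single nonnegative integrand $\|\nabla f\|^2-|Rf|^2$, whose spherical mean at radius $r$ is
\[
\sum_{k}|a_k|^2\,r^{2|k|-2}\,\frac{(n-1)!\,k!\,|k|}{(n-1+|k|)!}\,\bigl((n-1+|k|)-r^2|k|\bigr),
\]
a sum of nonnegative terms since $n\geq 1$ and $r\leq 1$; hence $|f|^2_\alpha=\sum_k C_k|a_k|^2$ holds unconditionally in $[0,\infty]$ and the coefficient comparison finishes the proof. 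Finally, observe that your argument uses only $\alpha<1$ (convergence of the radial Beta integral at $t=1$), so it in fact proves the equivalence for all $\alpha<1$; the restriction to $(-1,1)$ in the statement is inherited from the cited source rather than forced by anything in your computation.
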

	Above, $\nabla(f)(z)=(\partial_{z_1}f(z),...,\partial_{z_n}f(z))$ denotes the \emph{holomorphic gradient} of a holomorphic function $f.$  Note that Proposition~\ref{Prop relation among D.S.} allows us to use Lemma~\ref{le: equivalent int norm} whenever $\upsilon\in (-1,1).$
	.
	Let $\gamma,t\in \RR$ be such that neither $n+\gamma$ nor $n+\gamma+t$ is a negative integer. If $f=\sum_{|k|=0}^{\infty}a_kz^k$ is the homogeneous expansion of a function $f\in \textrm{Hol}(\BB),$ then we may define an invertible continuous linear operator with respect to the topology of uniform convergence on compact subsets of $\BB,$ denoted by $R^{\gamma,t}: \textrm{Hol}(\BB)\rightarrow \textrm{Hol}(\BB)$ and having expression
	\begin{equation*}
		R^{\gamma,t}f(z)=\sum_{|k|=0}^{\infty}C(\gamma,t,k)a_kz^k, \quad z\in \BB,
	\end{equation*}
	where
	\begin{equation}\label{estimate of C(g,t,k)}
		C(\gamma,t,k)=\frac{\Gamma(n+1+\gamma)\Gamma(n+1+|k|+\gamma+t)}{\Gamma(n+1+\gamma+t)\Gamma(n+1+|k|+\gamma)}\asymp |k|^t.
	\end{equation}
	See \cite{Zhu} for more information regarding these fractional radial derivatives.
	
	\begin{lemma}\label{integral repres of R}
		Let $t\in \RR$ be such that $n-1+t \geq 0.$ If $f\in A(\BB),$ then
		$$R^{-1,t}f(z)=\int_{\mathbb S_n}\frac{f(\zeta)}{(1-\langle z,\zeta \rangle)^{n+t}}d\sigma(\zeta), \quad z\in \BB.$$
		\begin{proof}
			The continuous linear operator $R^{\gamma,t},$ see \cite{Zhu}, satisfies
			$$R^{\gamma,t}\Big(\frac{1}{(1-\langle z,w \rangle)^{n+1+\gamma}}\Big)=\frac{1}{(1-\langle z,w \rangle)^{n+1+\gamma+t}}$$
			for all $w\in \BB.$
			Next, define $f_\epsilon$ for $\epsilon\in (0,1)$ by
			$$f_\epsilon(z)=\int_{\mathbb S_n}\frac{f(\zeta)}{(1-\langle z,\epsilon\zeta \rangle)^n}d\sigma(\zeta), \quad z\in \BB.$$
			The Cauchy formula holds for $f\in A(\BB)$ and hence $f=\lim_{\epsilon\rightarrow 1^{-}}f_\epsilon.$
			It follows that
			\begin{align*}
				R^{-1,t}f(z)&=R^{-1,t}\Big(\lim_{\epsilon\rightarrow 1^{-}}\int_{\mathbb S_n}\frac{f(\zeta)}{(1-\langle z,\epsilon\zeta \rangle)^n}d\sigma(\zeta) \Big)\\
				&=\lim_{\epsilon\rightarrow 1^{-}}R^{-1,t}\Big(\int_{\mathbb S_n}\frac{f(\zeta)}{(1-\langle z,\epsilon\zeta \rangle)^n}d\sigma(\zeta) \Big)\\
				&=\lim_{\epsilon\rightarrow 1^{-}}\int_{\mathbb S_n}f(\zeta)R^{-1,t}\Big(\frac{1}{(1-\langle z,\epsilon\zeta \rangle)^{n}}\Big)d\sigma(\zeta)\\
				&=\lim_{\epsilon\rightarrow 1^{-}}\int_{\mathbb S_n}\frac{f(\zeta)}{(1-\langle z,\epsilon\zeta \rangle)^{n+t}}d\sigma(\zeta)\\
				&=\int_{\mathbb S_n}\frac{f(\zeta)}{(1-\langle z,\zeta \rangle)^{n+t}}d\sigma(\zeta)
			\end{align*}
			and the assertion follows.
		\end{proof}
	\end{lemma}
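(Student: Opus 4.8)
The plan is to combine the Cauchy--Szeg\H{o} reproducing formula for the ball algebra with the action of the fractional operator $R^{-1,t}$ on the Cauchy kernel, and to make the two manipulations commute by first pushing the sphere of integration strictly inside $\BB$. The only structural fact I need about the operator is its effect on the Cauchy kernel: specializing $\gamma=-1$ gives $n+1+\gamma=n$ and $n+1+\gamma+t=n+t$, so that
$$R^{-1,t}\Bigl(\frac{1}{(1-\langle z,w\rangle)^{n}}\Bigr)=\frac{1}{(1-\langle z,w\rangle)^{n+t}}, \quad w\in\BB.$$
This follows by expanding $(1-\langle z,w\rangle)^{-n}$ in its homogeneous binomial series and comparing with the multiplier $C(-1,t,k)$, which depends only on $|k|$. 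The hypothesis $n-1+t\ge 0$ enters here precisely to make $n+t\ge 1$, so that the right-hand kernel is of genuinely positive order and both $C(-1,t,k)$ and the limiting boundary integral are well defined.

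First I would regularize. For $\epsilon\in(0,1)$ set $f_\epsilon(z)=\int_{\mathbb S_n}f(\zeta)(1-\langle z,\epsilon\zeta\rangle)^{-n}\,d\sigma(\zeta)$. For each fixed $\epsilon$, Cauchy--Schwarz gives $|\langle z,\epsilon\zeta\rangle|\le\epsilon\|z\|<1$ whenever $\|z\|<1/\epsilon$, so the integrand is holomorphic in $z$ on a neighbourhood of $\dBB$; consequently its homogeneous expansion in $z$ converges absolutely and uniformly for $z$ in compact subsets of $\{\|z\|<1/\epsilon\}$, uniformly in $\zeta\in\mathbb S_n$. The Cauchy formula for $f\in A(\BB)$ then yields $f=\lim_{\epsilon\to1^-}f_\epsilon$ uniformly on compact subsets of $\BB$.

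Next I would apply $R^{-1,t}$ and exploit that it is continuous for uniform convergence on compacts, so it commutes with the limit $\epsilon\to1^-$; it thus suffices to compute $R^{-1,t}f_\epsilon$ for fixed $\epsilon$. The key technical step is to interchange $R^{-1,t}$ with the $\zeta$-integral. Because the homogeneous series of the kernel converges uniformly in $(z,\zeta)$ on the relevant compacts, $R^{-1,t}$ may be applied term by term and the resulting series swapped with $\int_{\mathbb S_n}\,d\sigma$; applying the kernel identity with $w=\epsilon\zeta$ (where $\|\epsilon\zeta\|=\epsilon<1$) to each term collapses the computation to
$$R^{-1,t}f_\epsilon(z)=\int_{\mathbb S_n}\frac{f(\zeta)}{(1-\langle z,\epsilon\zeta\rangle)^{n+t}}\,d\sigma(\zeta).$$

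Finally I would let $\epsilon\to1^-$. For fixed $z\in\BB$ one has $|1-\langle z,\epsilon\zeta\rangle|\ge 1-\|z\|>0$ uniformly in $\epsilon$ and $\zeta$, so dominated convergence passes the limit inside the integral and yields the asserted formula $R^{-1,t}f(z)=\int_{\mathbb S_n}f(\zeta)(1-\langle z,\zeta\rangle)^{-(n+t)}\,d\sigma(\zeta)$. I expect the main obstacle to be the interchange of $R^{-1,t}$ with the integral in the middle step: the operator is defined only through its action on power series, equivalently by continuity in the compact-open topology, so the argument must route through the uniformly convergent homogeneous expansion rather than any pointwise formula for $R^{-1,t}$. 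The regularization by $\epsilon$ is exactly what buys this uniform convergence, by rendering the integrand holomorphic strictly beyond the unit sphere.
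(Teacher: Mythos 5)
Your proposal is correct and follows essentially the same route as the paper: regularize $f$ by the Cauchy integral over $\epsilon\zeta$, apply the kernel identity $R^{-1,t}\bigl((1-\langle z,w\rangle)^{-n}\bigr)=(1-\langle z,w\rangle)^{-(n+t)}$ with $w=\epsilon\zeta$ after interchanging $R^{-1,t}$ with the integral, and pass to the limit $\epsilon\to 1^-$. You additionally justify the two interchanges (via uniform convergence of the homogeneous expansion for fixed $\epsilon$, and dominated convergence using $|1-\langle z,\epsilon\zeta\rangle|\geq 1-\|z\|$), which the paper leaves implicit.
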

	
	\begin{theorem}\label{equivalent asym norm with cauchy formula}
		Let $\alpha\in \RR$ be such that $n-1+\alpha/2\geq 0$ and $f\in A(\BB).$ Then $f\in D_\alpha(\BB)$ if and only if
		$$\int_{\BB}(1-||z||^2)\Big|\int_{\mathbb S_n}\frac{f(\zeta)\bar{\zeta}_p}{(1-\langle z,\zeta \rangle)^{n+\alpha/2+1}}d\sigma(\zeta)\Big|^2du(z)<\infty$$
		and
		$$\int_{\BB}\Big|\int_{\mathbb S_n}\frac{(\overline{z_p\zeta_q-z_q\zeta_p})f(\zeta)}{(1-\langle z,\zeta \rangle)^{n+\alpha/2+1}}d\sigma(\zeta)\Big|^2du(z)<\infty,$$
		where $p,q=1,...,n.$
		\begin{proof}
			Choose $t$ so that $\alpha=2t.$	Note that $n,t$ are fixed and hence
			\begin{equation*}
				||f||^2_\alpha\asymp\sum_{|k|=0}^{\infty}\frac{(n-1)!k!}{(n-1+|k|)!}||k|^ta_{k}|^2.
			\end{equation*}
			Thus, \eqref{estimate of C(g,t,k)} implies that $||R^{-1,t}f||_{0}\asymp ||f||_\alpha.$ One can apply then the integral representation of Dirichlet-type norms to $R^{-1,t}f\in \textrm{Hol}(\BB),$ i.e. $||R^{-1,t}f||_{0}$ is equivalent to $|R^{-1,t}f|_{0}.$ 
			According to Lemma~\ref{integral repres of R} we get that
			$$\partial_{z_p}(R^{-1,t}f)(z)=\int_{\mathbb S_n}\frac{f(\zeta)\bar{\zeta}_p}{(1-\langle z,\zeta \rangle)^{n+t+1}}d\sigma(\zeta), \quad z\in \BB,$$
			where $p=1,...,n.$ Expand the term $||\nabla(f)||^2-|R(f)|^2$ as follows: 
			\begin{equation*}
				||\nabla(f)||^2-|R(f)|^2=(1-||z||^2)||\nabla(f)||^2+\sum_{p,q}|\bar{z}_p\partial_{z_q}f-\bar{z}_q\partial_{z_p}f|^2.
			\end{equation*}
			The assertion follows by Lemma~\ref{le: equivalent int norm}.
		\end{proof}
	\end{theorem}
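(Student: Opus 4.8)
The plan is to push the parameter down to the Hardy-type value $0$, where the integral representation of Lemma~\ref{le: equivalent int norm} is available, by means of the fractional radial operator $R^{-1,t}$ with $t=\alpha/2$. First I would fix $t$ with $\alpha=2t$ and note that, because $n$ and $t$ are fixed and $(n+|k|)^{\alpha/2}\asymp|k|^{t}$, the defining series norm satisfies
\[
\|f\|_{\alpha}^{2}\asymp\sum_{|k|=0}^{\infty}\frac{(n-1)!\,k!}{(n-1+|k|)!}\,\big|\,|k|^{t}a_{k}\big|^{2}.
\]
Since $R^{-1,t}f$ has homogeneous coefficients $C(-1,t,k)\,a_{k}$ and $C(-1,t,k)\asymp|k|^{t}$ by~\eqref{estimate of C(g,t,k)}, this shows $f\in D_{\alpha}(\BB)$ if and only if $g:=R^{-1,t}f\in D_{0}(\BB)$, with $\|g\|_{0}\asymp\|f\|_{\alpha}$. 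This is the manoeuvre that trades an arbitrary real parameter for the single admissible value $0\in(-1,1)$.

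Next, since $0\in(-1,1)$, Lemma~\ref{le: equivalent int norm} applies to $g$ and gives
\[
\|g\|_{0}^{2}\asymp|g|_{0}^{2}=\int_{\BB}\big(\|\nabla(g)(z)\|^{2}-|R(g)(z)|^{2}\big)\,du(z).
\]
The hypothesis $n-1+\alpha/2\ge 0$ is precisely $n-1+t\ge 0$, so Lemma~\ref{integral repres of R} applies and represents $g$ as the Cauchy-type integral $g(z)=\int_{\mathbb S_n}f(\zeta)(1-\langle z,\zeta\rangle)^{-(n+t)}\,d\sigma(\zeta)$; here the assumption $f\in A(\BB)$ is exactly what makes that representation valid. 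Differentiating under the integral sign (legitimate on compact subsets of $\BB$, where the kernel is smooth in $z$) yields, for $p=1,\dots,n$,
\[
\partial_{z_p}g(z)=\int_{\mathbb S_n}\frac{f(\zeta)\,\bar\zeta_{p}}{(1-\langle z,\zeta\rangle)^{n+t+1}}\,d\sigma(\zeta),
\]
the harmless constant $n+t$ being absorbed into $\asymp$; with $t=\alpha/2$ these are the integrands appearing in the statement.

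Finally I would split the integrand of $|g|_{0}^{2}$ by the Lagrange-type identity
\[
\|\nabla(g)\|^{2}-|R(g)|^{2}=(1-\|z\|^{2})\|\nabla(g)\|^{2}+\sum_{p<q}\big|\bar z_{p}\,\partial_{z_q}g-\bar z_{q}\,\partial_{z_p}g\big|^{2},
\]
applied to the vectors $(z_{p})_{p}$ and $(\overline{\partial_{z_p}g})_{p}$. Substituting the Cauchy integral for $\partial_{z_p}g$, the first term contributes $\int_{\BB}(1-\|z\|^{2})|\partial_{z_p}g|^{2}\,du$, i.e.\ the first displayed integral of the theorem, while
\[
\bar z_{p}\,\partial_{z_q}g-\bar z_{q}\,\partial_{z_p}g=\int_{\mathbb S_n}\frac{\overline{(z_{p}\zeta_{q}-z_{q}\zeta_{p})}\,f(\zeta)}{(1-\langle z,\zeta\rangle)^{n+t+1}}\,d\sigma(\zeta)
\]
turns the second term into the second displayed integral. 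Thus finiteness of $|g|_{0}^{2}$ is equivalent to finiteness of both integrals, and the chain $\|f\|_{\alpha}\asymp\|g\|_{0}\asymp|g|_{0}$ closes the argument. I expect the algebraic identity --- checking that $\|\nabla(g)\|^{2}-|R(g)|^{2}$ decomposes into exactly these two nonnegative pieces --- to be the crux, with the coefficient equivalence $\|R^{-1,t}f\|_{0}\asymp\|f\|_{\alpha}$ and the differentiation under the integral being the routine-but-necessary supporting steps.
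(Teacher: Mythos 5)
Your proposal is correct and follows essentially the same route as the paper: reduce to the Hardy parameter via $g=R^{-1,t}f$ with $t=\alpha/2$ using the coefficient estimate \eqref{estimate of C(g,t,k)}, invoke Lemma~\ref{le: equivalent int norm} at $0\in(-1,1)$, represent $\partial_{z_p}g$ by differentiating the Cauchy-type integral of Lemma~\ref{integral repres of R} (valid since $n-1+t\geq 0$ and $f\in A(\BB)$), and split $\|\nabla(g)\|^{2}-|R(g)|^{2}$ by the Lagrange identity into the two nonnegative pieces appearing in the statement. Your explicit remarks on absorbing the constant $n+t$ and on the nonnegativity of the two pieces are details the paper leaves implicit, but the argument is the same.
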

	
	\section{Diagonal subspaces}\label{section diagonal}
	
	In \cite{Extrem polyn}, a method of construction of optimal approximants via determinants in Dirichlet-type spaces in the unit disk is provided. Similarly, we may define optimal approximants in several variables, (see \cite{Sargent Sola}).
	
	Fix $N\in \mathbb N.$ We define the space of polynomials $p\in \CC[z_1,...,z_n]$ with degree at most $nN$ as follows:
	$$P_N^n:=\{p(z)=\sum_{k_1=0}^{N}... \sum_{k_n=0}^{N}a_{k_1,...,k_n}z_1^{k_1}\cdots z_n^{k_n}\}.$$
	
	\begin{remark}
		Let $(X,||\cdot||)$ be a normed space and fix $x\in X,$ $C\subset X.$ The distance between $x$ and the set $C$ is the following:
		$$\mathrm{dist}_X(x,C):=\inf\{||x-c||:c\in C\}.$$
		It is well known that if $X$ is a Hilbert space and $C\subset X$ a convex closed subset, then for any $x\in X,$ there exists a unique $y\in C$ such that $||x-y||=\mathrm{dist}_X(x,C).$ Let $f\in D_\alpha(\BB)$ be non-zero constant. We deduce that for any $N\in \mathbb N,$ there exists exactly one $p_N\in P_N^n$ satisfying
		$$||p_Nf-1||_\alpha=\mathrm{dist}_{D_\alpha(\BB)}(1,f\cdot P_N^n).$$ 
	\end{remark}
	
	Let $f\in D_\alpha(\BB).$ We say that a polynomial $p_N\in P^n_N$ is an $\emph{optimal}$ $\emph{approximant}$ $\emph{of order}$ $N$ to $1/f$ if $p_N$ minimizes $||pf-1||_\alpha$ among all polynomials $p\in P_N^n.$ We call $||p_Nf-1||_\alpha$ the $\emph{optimal norm of order}$ $N$ associated with $f.$
	
	Let $M=(M_1,...,M_n)$ be a multi-index, where $M_i$ are non-negative integers, and $m\in \{1,...,n\}.$ Setting
	$$\mu(m):=\frac{(M_1+...+M_m)^{M_1+...+M_m}}{M_1^{M_1}\cdots M_m^{M_m}},$$
	we see that
	\begin{equation}\label{mu kai z}
		\mu(m)^{1/2}|z_1|^{M_1}\cdots |z_m|^{M_m}\leq 1, \quad z\in \BB.
	\end{equation}
	
	Using \eqref{mu kai z} we may construct polynomials that vanish in the closed unit ball along analytic subsets of the unit sphere.
	
	\begin{remark}
		Let $\tilde{f}\in \mathrm{Hol}(\DD(\mu(m)^{-1/4})),$ where 
		$$\DD(\mu)=\{z\in \CC:|z|<\mu\}, \quad \mu> 0.$$
		According to \eqref{mu kai z}  we define the following function:
		$$f(z)=f(z_1,...,z_n)=\tilde{f}(\mu(m)^{1/4}z_1^{M_1}\cdots z_m^{M_m}), \quad z\in \BB.$$
		Then $f\in \mathrm{Hol}(\BB)$ and it depends on $m$ variables. Note that we may change the variables $z_1,...,z_m$ by any other $m$ variables. For convenience, we choose the $m$ first variables. The power $1/4$ will be convenient in the sequel.
	\end{remark}
	
	Thus, the question that arises out is if we may define closed subspaces of $D_\alpha(\BB)$ passing through one variable functions.	We shall see that these subspaces are called diagonal subspaces due to the nature of the power series expansion of their elements. 
	
	Instead of the classical one variable Dirichlet-type spaces of the unit disk, we may consider spaces $d_\beta,$ $\beta\in \RR,$ consisting of holomorphic functions $\tilde{f}\in \mathrm{Hol}(\DD(\mu^{-1/4})).$ Moreover, such functions with power series expansion $\tilde{f}(z)=\sum_{l=0}^{\infty}a_lz^l$
	are said to belong to $d_\beta$ if
	$$||\tilde{f}||^2_{d_\beta}:= \sum_{l=0}^{\infty}\mu^{-l/2}(l+1)^\beta|a_l|^2<\infty.$$
	
	There is a natural identification between the function theories of $D_\beta(\DD)$: one variable Dirichlet-type spaces of the unit disk, and $d_\beta,$ and one verifies that the results in \cite{Extrem polyn} are valid for $d_\beta.$
	
	We are ready to define diagonal closed subspaces. Set $$\beta(\alpha):=\alpha-n+\frac{m+1}{2}.$$
	
	Let $\alpha,$ $M,$ $m$ be as above. The diagonal closed subspace of $D_\alpha(\BB)$  is the following:
	$$J_{\alpha,M,m}:=\{f\in D_\alpha(\BB):\exists \tilde{f}\in d_{\beta(\alpha)}, f(z)= \tilde{f}(\mu(m)^{1/4}z_1^{M_1}\cdots z_m^{M_m})\}.$$
	
	The existence of a holomorphic function $\tilde{f}$ is unique by identity principle and hence there is no any amiss in the definition. Any function $f\in J_{\alpha,M,m}$ has an expansion of the form
	$$f(z)=\sum_{l=0}^{\infty}a_l(z_1^{M_1}\cdots z_m^{M_m})^l.$$
	
	The relation of norms between one variable and diagonal subspaces follows.
	\begin{proposition}\label{relation norm one variable diagonal}
		If $f\in J_{\alpha,M,m},$ then $||f||_\alpha\asymp ||\tilde{f}||_{d_{\beta(\alpha)}}.$
		\begin{proof}
			If $f\in J_{\alpha,M,m},$ then
			$$||f||^2_\alpha \asymp \sum_{l=0}^{\infty}(l+1)^\alpha\frac{(M_1l)!\cdots (M_ml)!}{(n-1+(M_1+\cdots M_m)l)!}|a_l|^2.$$
			By Stirling's formula, we obtain
			$$||f||^2_\alpha \asymp \sum_{l=0}^{\infty}(l+1)^{\alpha-n+m/2+1/2}\mu(m)^{-l}|a_l|^2.$$
			On the other hand, define the function
			$f'(z)=\sum_{l=0}^{\infty}\mu(m)^{-l/4}a_lz^l.$
			Then $f'(\mu(m)^{1/4}z_1^{M_1}\cdots z_m^{M_m})=f(z_1,...,z_n)$ and
			$$||f'||^2_{d_{\beta(\alpha)}}\asymp \sum_{l=0}(l+1)^{\alpha-n+m/2+1/2}\mu(m)^{-l}|a_l|^2.$$
			The assertion follows since $f'$ coincides with $\tilde{f}.$
		\end{proof}
	\end{proposition}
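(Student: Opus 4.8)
The plan is to compute $||f||_\alpha^2$ directly from the homogeneous expansion of $f$, to collapse it to a single series indexed by the diagonal degree $l$, and then to match the resulting weights with those defining $||\cdot||_{d_{\beta(\alpha)}}$ by means of Stirling's formula. Throughout set $s:=M_1+\cdots+M_m$, so that $\mu(m)=s^{s}/(M_1^{M_1}\cdots M_m^{M_m})$.

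First I would record the expansion. Writing $\tilde f(w)=\sum_{l=0}^\infty c_l w^l$, the defining relation of $J_{\alpha,M,m}$ gives
$$f(z)=\sum_{l=0}^\infty c_l\,\mu(m)^{l/4}\,z_1^{M_1 l}\cdots z_m^{M_m l},$$
so the only nonzero coefficients $a_k$ in $f=\sum_k a_k z^k$ occur at the multi-indices $k=(M_1 l,\dots,M_m l,0,\dots,0)$, and for these $|k|=sl$ and $k!=(M_1 l)!\cdots(M_m l)!$. Denoting the diagonal coefficient by $a_l:=a_k=c_l\mu(m)^{l/4}$ and substituting into the series norm~\eqref{norm with sum}, the fixed quantities $n,\alpha,s$ give $(n+sl)^\alpha\asymp(l+1)^\alpha$ while the constant $(n-1)!$ is absorbed by $\asymp$, leaving
$$||f||_\alpha^2\asymp\sum_{l=0}^\infty(l+1)^\alpha\,\frac{(M_1 l)!\cdots(M_m l)!}{(n-1+sl)!}\,|a_l|^2.$$

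The core computation is the asymptotics of the factorial quotient. I would apply $k!\asymp k^{k+1/2}e^{-k}$ to each $(M_i l)!$ and, after writing $(n-1+sl)!\asymp(sl)^{n-1}(sl)!$, to $(sl)!$. The factors $e^{-sl}$ and $l^{sl}$ then cancel between numerator and denominator; the exponential-in-$l$ part collapses to
$$\Big(\frac{M_1^{M_1}\cdots M_m^{M_m}}{s^{s}}\Big)^l=\mu(m)^{-l},$$
and the surviving power of $l$ is $l^{m/2-n+1/2}$ (up to a harmless multiplicative constant). Hence
$$\frac{(M_1 l)!\cdots(M_m l)!}{(n-1+sl)!}\asymp(l+1)^{m/2-n+1/2}\,\mu(m)^{-l},$$
and substituting back gives $||f||_\alpha^2\asymp\sum_{l}(l+1)^{\alpha-n+m/2+1/2}\mu(m)^{-l}|a_l|^2$.

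Finally I would identify the right-hand side as $||\tilde f||_{d_{\beta(\alpha)}}^2$. Using $\beta(\alpha)=\alpha-n+(m+1)/2$ together with $c_l=a_l\mu(m)^{-l/4}$, one checks
$$\sum_{l=0}^\infty(l+1)^{\alpha-n+m/2+1/2}\mu(m)^{-l}|a_l|^2=\sum_{l=0}^\infty\mu(m)^{-l/2}(l+1)^{\beta(\alpha)}|c_l|^2=||\tilde f||_{d_{\beta(\alpha)}}^2,$$
which is the assertion. I expect the only delicate point to be the Stirling bookkeeping of the preceding paragraph: one must track the $l$-dependent exponential factors carefully enough to see that they combine \emph{exactly} into $\mu(m)^{-l}$, so that the constant $\mu(m)$ fixed in the text is precisely the one forced by the computation, and that the leftover polynomial exponent is exactly $m/2-n+1/2$, which accounts for the shift between $\alpha$ and $\beta(\alpha)$. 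The replacement of $l$ by $l+1$ in the surviving powers, and the treatment of the $l=0$ term, are immaterial under $\asymp$.
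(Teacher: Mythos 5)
Your proposal is correct and follows essentially the same route as the paper's proof: collapse the norm \eqref{norm with sum} onto the diagonal indices, apply Stirling's formula so that the exponential factors combine into $\mu(m)^{-l}$ with leftover power $l^{m/2-n+1/2}$, and match the result with the $d_{\beta(\alpha)}$-norm via the coefficient rescaling $c_l=a_l\mu(m)^{-l/4}$ (the paper's $f'$). The only difference is that you carry out the Stirling bookkeeping explicitly, which the paper leaves implicit.
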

	
	The corresponding Lemma 3.4 of \cite{Extrem polyn II} in our case is the following.
	\begin{lemma}\label{ineq of norms projection}
		Let $f\in J_{\alpha,M,m},$ where $\alpha,M,m$ be as above. Let $r_N\in P_N^n$ with expansion
		$$r_N(z)=\sum_{k_1=0}^{N}\cdots \sum_{k_n=0}^{N}a_{k_1,...,k_n}z_1^{k_1}... z_n^{k_n},$$
		and consider its projection onto $J_{\alpha,M,m}$
		$$\pi(r_N)(z)=\sum_{\{l:M_1l,...,M_ml\leq N\}}c_{M_1l,...,M_ml,0,...,0}z_1^{M_1l}\cdots z_m^{M_ml}.$$
		Then
		$$||r_Nf-1||_\alpha\geq ||\pi(r_N)f-1||_\alpha.$$
	\end{lemma}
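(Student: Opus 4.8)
The plan is to recognize $\pi$ as the orthogonal projection of $D_\alpha(\BB)$ onto the closed subspace $J_{\alpha,M,m}$ and then to show that this projection commutes with multiplication by the diagonal function $f$. First I would record the structural fact: since the monomials $\{z^k\}$ are mutually orthogonal in $D_\alpha(\BB)$ (see the Standard tools), the subspace $J_{\alpha,M,m}$ is exactly the closed linear span of the \emph{diagonal} monomials $(z_1^{M_1}\cdots z_m^{M_m})^l$, $l\ge 0$. Consequently $\pi$ acts simply by discarding every monomial whose exponent is not of the form $(M_1 l,\dots,M_m l,0,\dots,0)$, so the formula for $\pi(r_N)$ in the statement is precisely the orthogonal projection of $r_N$ onto $J_{\alpha,M,m}$. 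In particular $\pi$ is a self-adjoint idempotent of norm one, and $\pi(1)=1$ since the constant function corresponds to $l=0$.

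The crucial step is the identity $\pi(r_N f)=\pi(r_N)\,f$. To prove it I would write $r_N=\pi(r_N)+g$, where $g$ collects the monomials $a_k z^k$ of $r_N$ with $k$ \emph{not} of diagonal form; note that $r_N$, $\pi(r_N)$ and $g$ are polynomials, hence multipliers of $D_\alpha(\BB)$, so all the products below lie in $D_\alpha(\BB)$. On one hand $\pi(r_N)f$ is again diagonal, being a combination of products $(z_1^{M_1}\cdots z_m^{M_m})^j\cdot(z_1^{M_1}\cdots z_m^{M_m})^l$, so $\pi(r_N)f\in J_{\alpha,M,m}$ and $\pi$ fixes it. On the other hand I would check that $gf$ has no diagonal component: for a single non-diagonal monomial one has $z^k f=\sum_l a_l\, z^k (z_1^{M_1}\cdots z_m^{M_m})^l$, whose exponents are $k+l\,(M_1,\dots,M_m,0,\dots,0)$, and such an exponent equals $(M_1 l',\dots,M_m l',0,\dots,0)$ for some $l'$ precisely when $k$ itself is a nonnegative multiple of $(M_1,\dots,M_m,0,\dots,0)$, i.e. only when $z^k$ is diagonal. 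Since $k$ is non-diagonal, every monomial of $z^k f$ is non-diagonal, whence $\pi(gf)=0$. Combining the two observations yields $\pi(r_N f)=\pi(\pi(r_N)f)+\pi(gf)=\pi(r_N)f$.

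With this identity the inequality follows from the Pythagorean theorem. Using linearity of $\pi$ together with $\pi(1)=1$ gives $\pi(r_N f-1)=\pi(r_N)f-1$, and since $\pi$ is an orthogonal projection the vector $r_N f-1$ decomposes orthogonally as the sum of $\pi(r_N f-1)=\pi(r_N)f-1\in J_{\alpha,M,m}$ and $(I-\pi)(r_N f-1)$. Hence
$$\|r_N f-1\|_\alpha^2=\|\pi(r_N)f-1\|_\alpha^2+\|(I-\pi)(r_N f-1)\|_\alpha^2\ge \|\pi(r_N)f-1\|_\alpha^2,$$
which is exactly the asserted estimate.

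The main obstacle I anticipate is the verification that a non-diagonal monomial $z^k$ times the diagonal function $f$ never produces a diagonal monomial; this is what makes $\pi$ and multiplication by $f$ interact so cleanly, and it rests on the elementary remark that $k+l\,(M_1,\dots,M_m,0,\dots,0)$ lands on the diagonal ray for some $l$ exactly when $k$ already does. Everything else is bookkeeping: confirming that $\pi(r_N)$ and $g$ are polynomials (hence multipliers, so the products stay in $D_\alpha(\BB)$ and the orthogonal splitting is legitimate), and that the degree constraints $M_i l\le N$ in the definition of $\pi(r_N)$ match exactly the diagonal monomials available in $r_N\in P_N^n$.
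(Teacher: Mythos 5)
Your proof is correct and takes essentially the same route as the source: the paper omits the argument, deferring to Lemma 3.4 of \cite{Extrem polyn II}, whose proof is precisely this orthogonality/Parseval argument --- since the monomials are orthogonal in $D_\alpha(\BB)$ and a non-diagonal monomial times the diagonal function $f$ never produces a diagonal exponent, the diagonal coefficients of $r_Nf-1$ are exactly those of $\pi(r_N)f-1$, and dropping the off-diagonal part of the norm gives the inequality. Your verification of the key combinatorial step (that $k+l(M_1,\dots,M_m,0,\dots,0)$ lies on the diagonal ray only if $k$ does) is exactly the point that makes $\pi(r_Nf)=\pi(r_N)f$ work, so nothing is missing.
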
	
	
	Moreover, just as in Proposition~\ref{relation norm one variable diagonal}, there is a relation of optimal approximants between one variable and diagonal subspaces.
	\begin{proposition}
		If $f\in J_{\alpha,M,m},$ then
		$$\mathrm{dist}_{D_\alpha(\BB)}(1,f\cdot P_N^n)\asymp \mathrm{dist}_{d_\beta(\alpha)}(1,\tilde{f}\cdot P^1_N).$$
		\begin{proof}
			Let $r_N,$ $\pi(r_N)$ be as in Lemma~\ref{ineq of norms projection}. Then $\pi(r_N)f-1\in J_{\alpha,M,m}.$ It follows that
			\begin{align*}
				||r_Nf-1||_\alpha\geq ||\pi(r_N)f-1||_\alpha
				\asymp ||\tilde{\pi}(r_N)\tilde{f}-1||_{d_{\beta(\alpha)}}
				\geq \mathrm{dist}_{d_{\beta(\alpha)}}(1,\tilde{f}\cdot P^1_N),
			\end{align*}
			since $\tilde{\pi}(r_N)\in P^1_N.$ On the other hand, let
			$$\mathrm{dist}_{d_{\beta(\alpha)}}(1,\tilde{f}\cdot P^1_N)=||q_N\tilde{f}-1||_{d_{\beta(\alpha)}}, \quad q_N(z)=\sum_{l=0}^{N}a_lz^l.$$
			Then, the polynomial
			$$q'_N(z_1,...,z_n)=\sum_{l=0}^{N}\mu(m)^{-l/4}a_lz_1^{M_1l}\cdots z_m^{M_ml}$$
			satisfies $q_N'\in J_{\alpha,M,m}\cap P_N^n$ and $q_N'f-1\in J_{\alpha,M,m}.$ Thus,
			\begin{align*}
				||q_N\tilde{f}-1||_{d_{\beta(\alpha)}}=||\tilde{q}_N'\tilde{f}-1||_{d_{\beta(\alpha)}}
				\asymp ||q_N'f-1||_\alpha
				\geq \mathrm{dist}_{D_\alpha(\BB)}(1,f\cdot P_N^n)
			\end{align*}
			and the assertion follows.
		\end{proof}
	\end{proposition}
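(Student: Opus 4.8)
The plan is to prove the equivalence by establishing the two one-sided comparisons that together make up the symbol $\asymp$, in each case transferring between the diagonal subspace $J_{\alpha,M,m}$ and the one-variable space $d_{\beta(\alpha)}$ via the norm equivalence of Proposition~\ref{relation norm one variable diagonal}. The two tools I would lean on are that equivalence and the projection estimate of Lemma~\ref{ineq of norms projection}, which guarantees that replacing an arbitrary approximant by its diagonal part can only decrease the error. A point I would flag at the outset and use repeatedly is that the class of diagonal functions (those depending only on the single monomial $z_1^{M_1}\cdots z_m^{M_m}$) is closed under multiplication and under subtracting constants; this is precisely what licenses applying Proposition~\ref{relation norm one variable diagonal} at each stage.

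For the first direction, that $\mathrm{dist}_{D_\alpha(\BB)}(1,f\cdot P_N^n)$ is bounded below by a constant multiple of $\mathrm{dist}_{d_{\beta(\alpha)}}(1,\tilde f\cdot P_N^1)$, I would begin from an arbitrary $r_N\in P_N^n$ and invoke Lemma~\ref{ineq of norms projection} to pass to the diagonal projection, obtaining $\|r_Nf-1\|_\alpha\geq\|\pi(r_N)f-1\|_\alpha$. Since both $f$ and $\pi(r_N)$ are diagonal, so is their product, and subtracting $1$ keeps us inside $J_{\alpha,M,m}$; Proposition~\ref{relation norm one variable diagonal} then gives $\|\pi(r_N)f-1\|_\alpha\asymp\|\tilde\pi(r_N)\tilde f-1\|_{d_{\beta(\alpha)}}$, where $\tilde\pi(r_N)$ is the one-variable polynomial corresponding to $\pi(r_N)$. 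Because $\tilde\pi(r_N)\in P_N^1$, the right-hand side is at least $\mathrm{dist}_{d_{\beta(\alpha)}}(1,\tilde f\cdot P_N^1)$, and taking the infimum over $r_N$ closes this direction.

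For the reverse direction I would run the construction backwards. Choose the optimal one-variable approximant $q_N\in P_N^1$ realizing $\|q_N\tilde f-1\|_{d_{\beta(\alpha)}}=\mathrm{dist}_{d_{\beta(\alpha)}}(1,\tilde f\cdot P_N^1)$, and lift it to the diagonal polynomial $q_N'(z)=\sum_{l=0}^N \mu(m)^{-l/4}a_l z_1^{M_1 l}\cdots z_m^{M_m l}$. Then $q_N'\in J_{\alpha,M,m}\cap P_N^n$ and $q_N'f-1$ is again diagonal, so Proposition~\ref{relation norm one variable diagonal} yields $\|q_N'f-1\|_\alpha\asymp\|q_N\tilde f-1\|_{d_{\beta(\alpha)}}$; since $q_N'\in P_N^n$, the left-hand side is an upper bound for $\mathrm{dist}_{D_\alpha(\BB)}(1,f\cdot P_N^n)$, and combining with the first direction gives the claimed $\asymp$.

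The routine but genuinely necessary bookkeeping, which I expect to be the only real obstacle, is the degree matching. I must verify that projecting onto $J_{\alpha,M,m}$ and lifting back both respect the bound $N$: a monomial $z_1^{M_1 l}\cdots z_m^{M_m l}$ surviving in $\pi(r_N)$ must correspond to $l\le N$, which holds because at least one exponent $M_i$ is positive and then $l\le M_i l\le N$; conversely the lift of a polynomial of degree at most $N$ in one variable lands back in $P_N^n$. Once these index constraints are checked, both chains of inequalities are immediate applications of the two quoted results.
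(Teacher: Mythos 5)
Your proposal is correct and follows the paper's proof essentially verbatim: the same use of Lemma~\ref{ineq of norms projection} to project an arbitrary $r_N$ onto the diagonal, the same application of Proposition~\ref{relation norm one variable diagonal} in both directions, and the same lift $q'_N(z)=\sum_{l=0}^N\mu(m)^{-l/4}a_l z_1^{M_1l}\cdots z_m^{M_ml}$ of the optimal one-variable approximant. Your closing degree-bookkeeping claim that the lift lands in $P_N^n$ is exactly the paper's own assertion $q'_N\in J_{\alpha,M,m}\cap P_N^n$ (strictly speaking both need $M_i l\le N$, which can fail when some $M_i\ge 2$, but this only shifts $N$ by a bounded factor and is harmless for the asymptotic statement).
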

	
	Define the function $\phi_{\beta}:[0,\infty)\rightarrow [0,\infty)$ by
	$$\phi_\beta(t)=
	\begin{dcases}
		t^{1-\beta}, & \beta<1 \\
		\log^+(t), & \beta=1
	\end{dcases}
	,$$
	where $\log^+(t):=\max\{\log t,0\}.$
	We have the following.
	\begin{theorem}
		Let $\alpha\in \RR$ be such that $\beta(\alpha)\leq 1.$ Let $f\in J_{\alpha,M,m}$ be as above and suppose the corresponding $\tilde{f}$ has no zeros inside its domain, has at least one zero on the boundary, and admit an analytic continuation to a strictly bigger domain. Then $f$ is cyclic in $D_\alpha(\BB)$ whenever $\alpha\leq \frac{2n-m+1}{2}$ and
		$$ \mathrm{dist}^2_{D_\alpha(\BB)}(1,f\cdot P_N^n)\asymp \phi_{\beta(\alpha)}(N+1)^{-1}.$$
		\begin{proof} It is an immediate consequence of the identification between $D_\beta(\DD)$ and $d_\beta$ and previous lemmas and propositions.
		\end{proof}
	\end{theorem}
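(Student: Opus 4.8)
The plan is to reduce the whole statement to the one-variable theory through the diagonal identification, where the sharp asymptotics of optimal norms are already available. The first step is to note that the two conditions in the hypothesis actually coincide: since $\beta(\alpha)=\alpha-n+\frac{m+1}{2}$, a direct computation shows that $\beta(\alpha)\le 1$ is equivalent to $\alpha\le\frac{2n-m+1}{2}$, so there is effectively a single standing assumption, namely $\beta(\alpha)\le 1$. This is precisely the range in which the one-variable estimates are valid, and it is the regime I will exploit.

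Next I would pass from $d_{\beta(\alpha)}$ to the genuine Dirichlet-type space of the disk. Writing $\beta=\beta(\alpha)$ and $\mu=\mu(m)$, the rescaling $g(w)=\tilde f(\mu^{-1/4}w)$ carries $\mathrm{Hol}(\DD(\mu^{-1/4}))$ onto $\mathrm{Hol}(\DD)$ and is isometric from $d_\beta$ onto $D_\beta(\DD)$, because the coefficient weight $\mu^{-l/2}(l+1)^\beta$ of $d_\beta$ becomes exactly $(l+1)^\beta$ after the substitution $a_l\mapsto\mu^{-l/4}a_l$. This is the concrete form of the identification asserted in the text, and through it the results of \cite{Extrem polyn}, \cite{Extrem polyn II} transfer to $d_\beta$. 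Moreover the three hypotheses on $\tilde f$ pass verbatim to $g$: it has no zeros in $\DD$, extends analytically across $\TT$, and vanishes somewhere on $\TT$.

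Having landed in $D_\beta(\DD)$ with $\beta\le 1$, I would invoke the sharp one-variable estimate for optimal norms from \cite{Extrem polyn}, \cite{Extrem polyn II}: for a function that does not vanish in $\DD$, extends analytically across $\TT$ (so that its boundary zeros are automatically finite in number), and has at least one boundary zero, the optimal norms satisfy
$$\mathrm{dist}^2_{D_\beta(\DD)}(1,g\cdot P_N^1)\asymp\phi_\beta(N+1)^{-1}.$$
Pulling this back through the isometry to $d_\beta$, and then through the distance-comparison proposition preceding this theorem, which furnishes $\mathrm{dist}_{D_\alpha(\BB)}(1,f\cdot P_N^n)\asymp\mathrm{dist}_{d_\beta}(1,\tilde f\cdot P_N^1)$, yields the claimed two-sided asymptotic $\mathrm{dist}^2_{D_\alpha(\BB)}(1,f\cdot P_N^n)\asymp\phi_{\beta(\alpha)}(N+1)^{-1}$. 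Cyclicity then comes for free: since $\phi_\beta(N+1)^{-1}$ equals $(N+1)^{\beta-1}$ for $\beta<1$ and $1/\log(N+1)$ for $\beta=1$, it tends to $0$; thus $\mathrm{dist}_{D_\alpha(\BB)}(1,f\cdot P_N^n)\to 0$, and as every $f\cdot p$ with $p\in P_N^n$ lies in $[f]$, we conclude $1\in[f]$.

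The substantive content is entirely housed in the cited one-variable asymptotic. Its upper bound already produces cyclicity, while the matching lower bound is the genuinely hard direction: it is responsible for the sharp exponent, and it is exactly there that the analytic-continuation hypothesis enters. On the several-variable side the only thing requiring care is that the transference be \emph{exact} rather than merely one-sided; this is guaranteed by the isometric rescaling above together with Lemma~\ref{ineq of norms projection}, which ensures that projecting an arbitrary approximant onto the diagonal never increases the approximation error, so that the several-variable optimal distance cannot beat the one-variable one. I expect this verification of exact transference, and the invocation of the correct two-sided one-variable bound, to be the main points to get right; everything else is bookkeeping through the identifications established in the previous propositions.
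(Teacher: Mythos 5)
Your proposal is correct and follows exactly the route the paper intends: its one-line proof defers to the identification between $D_{\beta}(\DD)$ and $d_{\beta}$ (which you make concrete via the isometric rescaling $a_l\mapsto\mu^{-l/4}a_l$), the distance-comparison proposition for $J_{\alpha,M,m}$, and the sharp one-variable asymptotics of \cite{Extrem polyn}, \cite{Extrem polyn II}. Your additional observations --- that $\beta(\alpha)\leq 1$ and $\alpha\leq\frac{2n-m+1}{2}$ are the same condition, and that the upper bound alone yields cyclicity while the lower bound carries the sharpness --- are accurate fillings-in of details the paper leaves implicit, not a departure from its argument.
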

	
	If we focus on polynomials, then the following is true.
	\begin{theorem}\label{Thm characterization of polynom}
		Consider the polynomial $p(z)=1-m^{m/2} z_1\cdots z_m,$ where $1\leq m\leq n.$ Then $p$ is cyclic in $D_\alpha(\BB)$ whenever $\alpha\leq\frac{2n+1-m}{2}.$
	\end{theorem}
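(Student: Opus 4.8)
The plan is to recognize $p$ as an element of a diagonal subspace $J_{\alpha,M,m}$ and then invoke the preceding theorem, for which $p$ turns out to be the exact model case. First I would fix the multi-index $M=(1,\dots,1,0,\dots,0)$ with precisely $m$ entries equal to $1$. With this choice $M_1+\dots+M_m=m$, so
$$\mu(m)=\frac{m^m}{1^1\cdots 1^1}=m^m,\qquad \mu(m)^{1/2}=m^{m/2},\qquad \mu(m)^{1/4}=m^{m/4}.$$
Writing $w=\mu(m)^{1/4}z_1\cdots z_m=m^{m/4}z_1\cdots z_m$ and substituting $z_1\cdots z_m=m^{-m/4}w$ into $p$, I would obtain
$$p(z)=1-m^{m/2}z_1\cdots z_m=1-m^{m/4}w=\tilde p(w),\qquad \tilde p(w):=1-m^{m/4}w.$$
Since $p$ is a polynomial it lies in every $D_\alpha(\BB)$, and the display above exhibits it in the diagonal form $\tilde p(\mu(m)^{1/4}z_1\cdots z_m)$, so $p\in J_{\alpha,M,m}$ for all $\alpha$, with associated one-variable function $\tilde p$.

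Next I would check that $\tilde p$ meets the three hypotheses imposed in the preceding theorem. The domain of $\tilde p$ is $\DD(\mu(m)^{-1/4})=\DD(m^{-m/4})$, and the unique root of $\tilde p$ sits at $w_0=m^{-m/4}$, i.e.\ exactly on the bounding circle $|w|=m^{-m/4}$. This is precisely the point of the normalizing constant $m^{m/2}$: it places the zero neither inside the disk (which would force $p$ to vanish in $\BB$ and hence be non-cyclic) nor strictly outside, but on the boundary, matching $\mathcal Z(p)\cap\mathbb S_n$. Thus $\tilde p$ has no zeros inside its disk of holomorphy, has exactly one boundary zero, and—being affine—continues analytically to all of $\CC$, in particular to a strictly larger disk.

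Then I would translate the parameter condition. By definition $\beta(\alpha)=\alpha-n+\frac{m+1}{2}$, so
$$\beta(\alpha)\leq 1\iff \alpha\leq n+1-\frac{m+1}{2}=\frac{2n+1-m}{2},$$
which is exactly the stated range, and it coincides with the cyclicity threshold $\alpha\leq\frac{2n-m+1}{2}$ appearing in the preceding theorem. Applying that theorem to $f=p$ then yields cyclicity of $p$ in $D_\alpha(\BB)$ throughout the claimed range, and as a by-product the sharp decay $\mathrm{dist}^2_{D_\alpha(\BB)}(1,p\cdot P_N^n)\asymp\phi_{\beta(\alpha)}(N+1)^{-1}$ of the optimal norms.

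The argument is essentially a bookkeeping reduction, so I do not expect a genuine obstacle: the only real verification is the algebraic identity $\mu(m)=m^m$ together with the observation that the rescaling sends the unique root of $\tilde p$ onto the boundary circle, reducing matters to the classical one-variable model $1-cw$ whose cyclicity in $d_{\beta(\alpha)}$ (equivalently $D_{\beta(\alpha)}(\DD)$) is known to hold precisely when $\beta(\alpha)\leq 1$. The substantive work—the identification of $D_\beta(\DD)$ with $d_\beta$, the projection inequality of Lemma~\ref{ineq of norms projection}, and the transfer of optimal norms between one variable and the diagonal subspace—has already been carried out in the preceding results, so the proof amounts to assembling these pieces with the correct normalizing constants.
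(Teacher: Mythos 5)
Your proposal is correct, and every computation in it checks out: with $M=(1,\dots,1,0,\dots,0)$ one indeed gets $\mu(m)=m^m$, the associated one-variable function is $\tilde p(w)=1-m^{m/4}w$ with its unique zero on the boundary circle $|w|=m^{-m/4}$ of $\DD(\mu(m)^{-1/4})$, and $\beta(\alpha)\leq 1$ is equivalent to $\alpha\leq\frac{2n+1-m}{2}$, so the diagonal-subspace theorem applies and even yields the decay $\mathrm{dist}^2_{D_\alpha(\BB)}(1,p\cdot P_N^n)\asymp\phi_{\beta(\alpha)}(N+1)^{-1}$. However, this is not the proof the paper writes down. Although Theorem~\ref{Thm characterization of polynom} is stated in Section~\ref{section diagonal} immediately after the diagonal theorem (and your argument is clearly the application that theorem is designed for), the paper explicitly proves it in Section~\ref{section radial}: it reduces the statement to Lemma~\ref{missing}, namely that $\sup_{r}\|p/p_r\|_{\alpha}<\infty$ at the endpoint $\alpha_0=\frac{2n+1-m}{2}$, establishes this through the equivalent Cauchy-transform norm of Theorem~\ref{equivalent asym norm with cauchy formula} together with orthogonality of monomials in $L^2(\sigma)$ and repeated Stirling estimates, and then concludes cyclicity from the weak convergence $p/p_r\to 1$ (the case $\alpha<\alpha_0$ following from the compact inclusion $D_{\alpha_0}(\BB)\hookrightarrow D_\alpha(\BB)$). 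The trade-off is worth noting: your route is shorter and gives the sharp optimal-norm asymptotics as a by-product, but it rests on the preceding theorem, whose proof the paper only sketches (the identification of $d_{\beta}$ with $D_\beta(\DD)$, the projection inequality of Lemma~\ref{ineq of norms projection}, and the transfer of distances, ultimately importing the one-variable results of B\'en\'eteau et al.); the paper's radial-dilation proof is self-contained in the ball, treats the endpoint parameter directly, and—as the paper emphasizes—is a method that applies to arbitrary polynomials rather than only to functions with diagonal power series, which is why Section~\ref{section radial} is presented as the verification of the claims made in Section~\ref{section diagonal}.
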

	
	Note that the Theorem\ref{Thm characterization of polynom} is not a characterization. We shall study the case $\alpha>\frac{2n+1-m}{2}.$
	
	\section{Cyclicity for model polynomials via radial dilation}\label{section radial}
	
	The \emph{radial dilation} of a function $f:\BB\rightarrow \CC$ is defined for $r\in(0,1)$ by $f_r(z)=f(rz).$  
	To prove Theorem~\ref{Thm characterization of polynom}, it is enough to prove the following lemma.
	
	\begin{lemma}\label{missing} Consider the polynomial $p(z)=1-m^{m/2} z_1\cdots z_m,$ where $1\leq m\leq n.$ Then $||p/p_r||_{\alpha}<\infty$ as $r\rightarrow 1^{-}$  whenever $\alpha\leq\frac{2n+1-m}{2}.$
	\end{lemma}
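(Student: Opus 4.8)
The plan is to recognize $p/p_r$ as a \emph{diagonal function} and to transfer the estimate to the one-variable space $d_{\beta(\alpha)}$, where it becomes the classical radial-dilation computation for the model polynomial $1-w$ in $D_{\beta(\alpha)}(\DD)$. First I would set $w=z_1\cdots z_m$, so that $p_r(z)=1-m^{m/2}r^m w$ and $p(z)/p_r(z)=G(w)$ with $G(w)=(1-m^{m/2}w)/(1-m^{m/2}r^m w)$. The relevant multi-index is $M=(1,\dots,1,0,\dots,0)$ with $m$ ones, for which $\mu(m)=m^m$ and $\mu(m)^{1/2}=m^{m/2}$; thus \eqref{mu kai z} reads $m^{m/2}|z_1\cdots z_m|\le 1$ on $\overline{\mathbb B}_n$. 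Consequently $|m^{m/2}r^m z_1\cdots z_m|\le r^m<1$ for every $z\in\overline{\mathbb B}_n$, so $p_r$ does not vanish on the closed ball and $p/p_r$ is holomorphic in a neighborhood of $\overline{\mathbb B}_n$; in particular $p/p_r\in J_{\alpha,M,m}\subset D_\alpha(\BB)$ for each fixed $r\in(0,1)$. The whole question is therefore the uniform boundedness of $||p/p_r||_\alpha$ as $r\to 1^-$.

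Next I would compute the homogeneous expansion explicitly. Summing the geometric series gives
$$\frac{p}{p_r}=1-m^{m/2}(1-r^m)\sum_{l\ge 1}(m^{m/2}r^m)^{l-1}(z_1\cdots z_m)^l,$$
so the only nonzero coefficients entering \eqref{norm with sum} are $a_0=1$ and $a_K=-m^{m/2}(1-r^m)(m^{m/2}r^m)^{l-1}$ for $K=(l,\dots,l,0,\dots,0)$. For such $K$ one has $|K|=ml$ and $K!=(l!)^m$, and the Stirling estimate already used in the proof of Proposition~\ref{relation norm one variable diagonal} yields $\frac{(l!)^m}{(n-1+ml)!}\asymp (l+1)^{-n+m/2+1/2}m^{-ml}$.

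Substituting this, together with $(n+ml)^\alpha\asymp(l+1)^\alpha$, the crucial cancellation occurs: the factor $m^{-ml}$ from the multinomial weight cancels against $(m^{m/2})^{2(l-1)}=m^{m(l-1)}$ coming from $|a_K|^2$, so that all powers of $m$ collapse to a bounded constant, leaving
$$||p/p_r||^2_\alpha\asymp 1+(1-r^m)^2\sum_{l\ge 1}(l+1)^{\beta(\alpha)}r^{2m(l-1)},\qquad \beta(\alpha)=\alpha-n+\tfrac{m+1}{2}.$$
This exponent cancellation is exactly what forces the normalizing constant $m^{m/2}$ and is the reason the threshold depends on $m$; it is the one point I would check carefully, but it is bookkeeping rather than a conceptual obstacle.

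Finally I would invoke the binomial asymptotic from the standard tools, namely $\sum_{l}(l+1)^{\beta}x^{l}\asymp(1-x)^{-\beta-1}$ as $x\to1^-$ when $\beta>-1$ (the sum remaining bounded when $\beta\le-1$). With $x=r^{2m}$ and $1-r^{2m}\asymp 1-r^m\asymp 1-r$, this gives $||p/p_r||^2_\alpha\asymp 1+(1-r)^{1-\beta(\alpha)}$, which stays bounded as $r\to1^-$ precisely when $\beta(\alpha)\le 1$. Since $\beta(\alpha)\le1$ is equivalent to $\alpha\le n+1-\frac{m+1}{2}=\frac{2n+1-m}{2}$, the claim follows. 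Equivalently, one can route the whole argument through Proposition~\ref{relation norm one variable diagonal} and the identification of $d_{\beta}$ with $D_\beta(\DD)$, reducing directly to the boundedness of $||(1-w)/(1-r^m w)||_{D_{\beta(\alpha)}(\DD)}$, which is the familiar one-variable statement that $1-w$ has uniformly bounded dilations in $D_\beta(\DD)$ exactly when $\beta\le1$. I expect no serious difficulty beyond tracking the Stirling and binomial asymptotics and verifying the exponent cancellation.
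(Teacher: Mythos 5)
Your proposal is correct, and it takes a genuinely different route from the paper. The paper proves Lemma~\ref{missing} through the integral machinery of Section~\ref{section Relation among}: it invokes Theorem~\ref{equivalent asym norm with cauchy formula} (fractional radial derivatives $R^{-1,t}$ with $t=\frac{2n+1-m}{4}$ plus the Cauchy-type representation of Lemma~\ref{integral repres of R}), reduces boundedness of $\|p/p_r\|_\alpha$ to finiteness of the two sphere--ball integrals $I_p$ and $I_{p,q}$, and then grinds these down via binomial expansions, orthogonality of monomials in $L^2(\sigma)$, polar coordinates, and Stirling, arriving at $I_p\asymp I_{p,q}\asymp(1-r^m)^2\sum_{k\ge0}(k+1)r^{2mk}$, which is bounded. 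You instead observe that $p/p_r$ is itself a \emph{diagonal} function: its monomial expansion is supported on the single ray $K=(l,\dots,l,0,\dots,0)$, so the defining sum \eqref{norm with sum} collapses to a one-dimensional series, and the same Stirling estimate used in Proposition~\ref{relation norm one variable diagonal} (together with the exact cancellation $m^{ml}\cdot m^{-ml}=1$, which you correctly identify as the role of the normalization $m^{m/2}$) gives $\|p/p_r\|^2_\alpha\asymp 1+(1-r^m)^2\sum_{l\ge1}(l+1)^{\beta(\alpha)}r^{2m(l-1)}$, bounded exactly when $\beta(\alpha)\le 1$, i.e. $\alpha\le\frac{2n+1-m}{2}$; equivalently, via Proposition~\ref{relation norm one variable diagonal} and the identification of $d_\beta$ with $D_\beta(\DD)$, this is the classical one-variable dilation bound for $1-w$. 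Your computation is verifiably correct (at the endpoint $\beta(\alpha)=1$ one gets $(1-r^m)^2(1-r^{2m})^{-2}\asymp1$, matching the paper's estimate), and it is shorter and more elementary. What the paper's heavier route buys is generality: the radial dilation method through the integral norm does not depend on $p/p_r$ having diagonal structure, and the paper explicitly advertises it as applicable to arbitrary polynomials, for which $p/p_r$ has a full multi-index coefficient array and no direct coefficient summation of the kind you perform is feasible. Two harmless slips to fix in a write-up: the parenthetical claim that $\sum_l(l+1)^\beta x^l$ stays bounded for $\beta\le-1$ fails at $\beta=-1$ (logarithmic divergence), and the asymptotic $\|p/p_r\|^2_\alpha\asymp 1+(1-r)^{1-\beta(\alpha)}$ as stated requires $\beta(\alpha)>-1$; in both cases the prefactor $(1-r^m)^2$ absorbs the growth, so your conclusion is unaffected.
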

	
	We follow the arguments of \cite{Kosinski Vav}, \cite{Knese}. Indeed, if Lemma~\ref{missing} holds, then $\phi_r\cdot p\rightarrow 1$ weakly, where $\phi_r:=1/p_r.$ This is a consequence of a crucial property of Dirichlet-type spaces: if $\{f_n\}\subset D_\alpha(\BB),$ then $f_n\rightarrow 0$ weakly if and only if $f_n\rightarrow 0$ pointwise and $\sup_{n}\{||f_n||_\alpha\}<\infty.$  Since $\phi_r$ extends holomorphically past the closed unit ball, $\phi_r$ are multipliers, and hence, $\phi_r\cdot p\in[p].$ Finally, $1$ is weak limit of $\phi_r\cdot p$ and $[p]$ is closed and convex or, equivalently, weakly closed. It is clear that $1\in [p],$ and hence, $p$ is cyclic.
	
	Moreover, it is enough to prove that $||p/p_r||_\alpha<\infty,$ as $r\rightarrow 1^{-},$ for $\alpha_0=\frac{2n+1-m}{2}.$ Then the case $\alpha<\alpha_0$ follows since the inclusion $D_{\alpha_0}(\BB)\hookrightarrow D_\alpha(\BB)$ is a compact linear map and weak convergence in $D_{\alpha_0}(\BB)$ gives weak convergence in $D_\alpha(\BB).$
	
	\begin{proof}[Proof of Lemma~\ref{missing}]
		By Theorem~\ref{equivalent asym norm with cauchy formula} it is enough to show the following:
		$$I_p:=\int_{\BB}(1-||z||^2)\Big|\int_{\mathbb S_n}\frac{(1-\lambda \zeta_1\cdots \zeta_m)\bar{\zeta}_p}{(1-r^m\lambda \zeta_1\cdots \zeta_m)(1-\langle z,\zeta \rangle)^{\beta}}d\sigma(\zeta)\Big|^2du(z)$$
		and
		$$I_{p,q}:=\int_{\BB}\Big|\int_{\mathbb S_n}\frac{(\overline{z_p\zeta_q-z_q\zeta_p})(1-\lambda \zeta_1\cdots \zeta_m)}{(1-r^m\lambda \zeta_1\cdots \zeta_m)(1-\langle z,\zeta \rangle)^{\beta}}d\sigma(\zeta)\Big|^2du(z)$$
		are finite, as $r\rightarrow 1^{-},$ where $\beta=n+t+1,$ $t=\frac{2n+1-m}{4},$ and $\lambda=m^{m/2}.$ 
		
		Denote
		$$S_p:=\int_{\mathbb S_n}\frac{(1-\lambda \zeta_1\cdots \zeta_m)\bar{\zeta}_p}{(1-r^m\lambda \zeta_1\cdots \zeta_m)(1-\langle z,\zeta \rangle)^{\beta}}d\sigma(\zeta),$$
		where the last integral is equal to
		$$\frac{1}{2\pi}\int_{\mathbb S_n}\int_{0}^{2\pi}\frac{(1-\lambda e^{im\theta}\zeta_1\cdots \zeta_m)e^{-i\theta}\bar{\zeta}_p}{(1-r^m\lambda e^{im\theta}\zeta_1\cdots \zeta_m)(1-e^{-i\theta}\langle z,\zeta \rangle)^\beta}d\theta d\sigma(\zeta).$$
		Let $z,\zeta$ be fixed. Then 
		$$ \int_{0}^{2\pi}\frac{e^{-i\theta}}{(1-e^{-i\theta}\langle z,\zeta \rangle)^\beta}d\theta=0.$$
		Thus, replacing $p(e^{i\theta}\zeta)/p(re^{i\theta}\zeta)$ by $p(e^{i\theta}\zeta)/p(re^{i\theta}\zeta)-1$ we obtain
		$$S_p=\frac{\lambda(r^m-1)}{2\pi}\int_{\mathbb S_n}\int_{0}^{2\pi}\frac{ \bar{\zeta}_p\zeta_1\cdots\zeta_me^{i(m-1)\theta}}{(1-r^m\lambda e^{im\theta}\zeta_1\cdots \zeta_m)(1-e^{-i\theta}\langle z,\zeta \rangle)^\beta}d\theta d\sigma(\zeta).$$
		
		Next, expand the binomials
		\begin{align*}
			\int_{0}^{2\pi}&\frac{e^{i(m-1)\theta}}{(1-r^m\lambda e^{im\theta}\zeta_1\cdots \zeta_m)(1-e^{-i\theta}\langle z,\zeta \rangle)^\beta}d\theta\\
			&=\sum_{k=0}^{\infty}\sum_{l=0}^{\infty}\frac{\Gamma(k+\beta)}{\Gamma(\beta)k!}(r^m\lambda\zeta_1\cdots \zeta_m)^l\langle z,\zeta \rangle^k\int_{0}^{2\pi}e^{i(m(l+1)-k-1)\theta}d\theta\\
			&=2\pi\sum_{k=0}^{\infty}\frac{\Gamma(m(k+1)-1+\beta)}{\Gamma(\beta)(m(k+1)-1)!}(r^m\lambda\zeta_1\cdots \zeta_m)^k\langle z,\zeta \rangle^{m(k+1)-1}\\
			&=2\pi \sum_{k=0}^{\infty}\sum_{|j|=m(k+1)-1}\frac{\Gamma(m(k+1)-1+\beta)}{\Gamma(\beta)j!}(r^m\lambda\zeta_1\cdots \zeta_m)^kz^j\bar{\zeta}^j.
		\end{align*}
		Therefore,
		$$S_p=\lambda(r^m-1)\sum_{k=0}^{\infty}\sum_{|j|=m(k+1)-1}\frac{\Gamma(m(k+1)-1+\beta)}{\Gamma(\beta)j!}(r^m\lambda)^kz^jc(k),$$
		where $c(k)=\int_{\mathbb{S}_n}\zeta^{\alpha(k)}\bar{\zeta}^{b(k)}d\sigma(\zeta),$ 
		$\alpha(k)=(k+1,..,k+1\text{(m-comp.)},0,..,0)$ and $b(k)=(j_1,...,j_{p-1},j_p+1,j_{p+1},...,j_n).$ Whence, $1\leq p\leq m.$ Since the holomorphic monomials are orthogonal to each other in $L^2(\sigma)$ we get that
		$$|S_p|\asymp(1-r^m)\Big|z'_p\sum_{k=0}^{\infty}(k+1)^{\beta-n}(r^m\lambda z_1\cdots z_m)^k\Big|,$$
		where $z'_p=z_1\cdots z_{p-1}z_{p+1}\cdots z_m.$
		Hence we obtain 
		$$I_p\asymp(1-r^m)^2\sum_{k=0}^{\infty}(k+1)^{2(\beta-n)}(r^m\lambda)^{2k}\int_{\BB}(1-||z||^2)|z'_p|^2|z_1\cdots z_m|^{2k}du(z),$$
		where has been used again the orthogonality of the holomorphic monomials in $L^2(\sigma).$ 
		To handle the integral above we use polar coordinates
		\begin{align*}
			\int_{\BB}&(1-||z||^2)|z'_p|^2|z_1\cdots z_m|^{2k}du(z)\\
			&\asymp \int_{0}^{1}\int_{\mathbb S_n}\epsilon^{2n-1}(1-\epsilon^2)\epsilon^{2km+2m-2}|\zeta_p'|^2|\zeta_1\cdots \zeta_m|^{2k}d\sigma(\zeta)d\epsilon\\
			&\asymp \frac{[(k+1)!]^{m-1}k!}{(n+m(k+1)-2)!}\cdot\frac{1}{(k+1)^2}.
		\end{align*}
		If we recall that $\beta=n+t+1,$ $t=\frac{2n+1-m}{4}$ and $\lambda^{2k}=m^{mk},$ then applying the Stirling formula more than one time we see that
		$$ I_p\asymp(1-r^m)^2 \sum_{k=0}^{\infty}(k+1)r^{2mk}.$$
		This proves the assertion made about $I_p.$ 
		
		It remains to estimate the following term:
		$$I_{p,q}=\int_{\BB}\Big|\int_{\mathbb S_n}\frac{(\overline{z_p\zeta_q-z_q\zeta_p})(1-\lambda \zeta_1\cdots \zeta_m)}{(1-r^m\lambda \zeta_1\cdots \zeta_m)(1-\langle z,\zeta \rangle)^{\beta}}d\sigma(\zeta)\Big|^2du(z).$$
		We shall show that $I_{p,q}\asymp I_p.$ Denote again the inner integral by $S_{p,q}$ which is convenient to  expand it as $S_{p,q}=\bar{z}_pS_q-\bar{z}_qS_p.$ Recall that $z_p'=z_1\cdots z_{p-1}z_{p+1}\cdots z_m.$ Similar calculations to the one above lead to
		$$|S_{p,q}|\asymp(1-r^m)|\bar{z}_pz_q'-\bar{z}_qz_p'|\Big|\sum_{k=0}^{\infty}(k+1)^{\beta-n}(r^m\lambda z_1\cdots z_m)^k\Big|.$$
		Moreover, the orthogonality of the holomorphic monomials in $L^2(\sigma)$ gives the following estimation:
		$$I_{p,q}\asymp (1-r^m)^2\sum_{k=0}^{\infty}(k+1)^{2\beta-2n}(r^m\lambda)^{2k}\int_{\BB}|\bar{z}_pz_q'-\bar{z}_qz_p'|^2|z_1\cdots z_m|^{2k}du(z).$$
		It is easy to see that $|\bar{z}_pz_q'-\bar{z}_qz_p'|^2=|z_p|^2|z'_q|^2+|z_q|^2|z'_p|^2-2|z_1\cdots z_m|^2.$ Let us estimate the integral
		$$\int_{\BB}(|z_p|^2|z_q'|^2-|z_1 \cdots z_m|^2)|z_1\cdots z_m|^{2k}du(z).$$
		Passing through polar coordinates we get, for $p\neq q,$ that
		\begin{align*}
			\int_{\BB}|z_p|^2|z_q'|^2&|z_1\cdots z_m|^{2k}du(z)\\
			&\asymp 2n(n-1)!\frac{[(k+1)!]^{m-1}k!}{(mk+n+m-1)!}\frac{k+2}{2km+2n+2m},
		\end{align*}
		and
		\begin{align*}
			\int_{\BB}|z_1\cdots z_m|^{2(k+1)}&du(z)\\
			&=2n(n-1)!\frac{[(k+1)!]^{m-1}k!}{(mk+n+m-1)!}\frac{k+1}{2km+2n+2m}.
		\end{align*}
		Hence we obtain
		\begin{align*}
			\int_{\BB}(|z_p|^2|z_q'|^2-|z_1 \cdots z_m|^2)&|z_1\cdots z_m|^{2k}du(z)\\
			&\asymp \frac{[(k+1)!]^{m-1}k!}{(mk+n+m-2)!(k+1)^2}.
		\end{align*}
		Again, applying the Stirling formula to the one above estimates we obtain
		$$I_{p,q}\asymp(1-r^m)^2 \sum_{k=0}^{\infty}(k+1)r^{2mk}.$$
		This proves the assertion made about $I_{p,q}.$
	\end{proof}
	
	\section{Sufficient conditions for non-cyclicity via Cauchy transforms and $\alpha$-capacities}\label{section capacity}
	
	We consider the $\emph{Cauchy transform}$ of a complex Borel measure $\mu$ on the unit sphere by
	$$C_{[\mu]}(z)=\int_{\mathbb S_n}\frac{1}{(1-\langle z,\bar{\zeta}\rangle )^n}d\mu(\zeta), \quad z\in \BB.$$
	Note that this definition differs from the classical one.
	
	Let $f\in D_\alpha(\BB)$ and put a measure $\mu$ on $\mathcal{Z}(f^*)$: the zero set in the sphere of the radial limits of $f$. The results in \cite{Sola} about Cauchy transforms and non-cyclicity are valid in our settings. We deduce that $[f] \neq D_\alpha(\BB)$, and hence non-cyclicity, whenever $C_{[\mu]}\in D_{-\alpha}(\BB).$ Thus, it is important to compute the Dirichlet-type norm of the Cauchy transform.
	
	Let $\mu$ be a Borel measure on $\mathbb{S}_n$ and set $\mu^*(j)=\int_{\mathbb S_n}\zeta^jd\mu(\zeta),$ $\bar{\mu}^*(j)=\int_{\mathbb S_n}\bar{\zeta}^jd\mu(\zeta).$  We have the following.
	\begin{lemma}\label{Cauchy norm}
		Let $\mu$ be a Borel measure on $\mathbb{S}_n.$ Then
		$$||C_{[\mu]}||_{-\alpha}^2\asymp \sum_{k=0}^{\infty}\sum_{|j|=k}\frac{(k+1)^{n-1-\alpha}k!}{j!}|\bar{\mu}^*(j)|^2.$$
		\begin{proof}
			Our Cauchy integral of $\mu$ on $\BB$ has the following expansion 
			$$C_{[\mu]}(z)=\sum_{k=0}^{\infty}\sum_{|j|=k}\frac{\Gamma(k+n)}{\Gamma(n)j!}\bar{\mu}^*(j)z^j.$$
			Therefore, one can compute the norm of $C_{[\mu]}$ in the space $D_{-\alpha}(\BB).$  The assertion follows.
		\end{proof}
	\end{lemma}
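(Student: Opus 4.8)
The plan is to compute the homogeneous Taylor expansion of $C_{[\mu]}$ explicitly, read off its coefficients, insert them into the defining series \eqref{norm with sum} for the $D_{-\alpha}(\BB)$ norm, and then collapse the resulting product of Gamma and factorial factors, finishing with a single application of the asymptotics of the Gamma function.

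First I would expand the Cauchy kernel. For fixed $\zeta\in\mathbb S_n$ and $z$ ranging over a compact subset of $\BB$ one has $|\langle z,\bar\zeta\rangle|\leq ||z||\,||\zeta||=||z||<1$, so the binomial series applies to $1/(1-\langle z,\bar\zeta\rangle)^n$, and the multinomial formula expands each power $\langle z,\bar\zeta\rangle^k$ into monomials $z^j$ with $|j|=k$. Since $\mu$ is a finite Borel measure and the convergence is uniform in $\zeta$ on compacta, Fubini's theorem permits integrating term by term, producing exactly the stated expansion
$$C_{[\mu]}(z)=\sum_{k=0}^{\infty}\sum_{|j|=k}\frac{\Gamma(k+n)}{\Gamma(n)j!}\,\bar{\mu}^*(j)\,z^j,$$
so that the coefficient of $z^j$ is $a_j=\frac{\Gamma(k+n)}{\Gamma(n)j!}\bar{\mu}^*(j)$ with $k=|j|$.

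Next I would substitute $a_j$ into $||\cdot||^2_{-\alpha}$, weighting $|a_j|^2$ by $(n+k)^{-\alpha}\frac{(n-1)!\,j!}{(n-1+k)!}$ and summing. The key algebraic step is that the factor $1/(j!)^2$ coming from $|a_j|^2$ combines with the factor $j!$ in the weight, and together with $\Gamma(k+n)^2/\Gamma(n)^2=\big((k+n-1)!/(n-1)!\big)^2$ the expression collapses to the clean coefficient $(n+k)^{-\alpha}\frac{(k+n-1)!}{(n-1)!\,j!}$. Finally, applying the asymptotic $\Gamma(k+n)/\Gamma(n)=(k+n-1)!/(n-1)!\asymp (k+1)^{n-1}k!$ together with $(n+k)^{-\alpha}\asymp(k+1)^{-\alpha}$ rewrites this coefficient as $(k+1)^{n-1-\alpha}\frac{k!}{j!}$ up to two-sided constants, which is precisely the claimed expression.

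The routine part here is the factorial bookkeeping; the only genuine point requiring care is the justification of the term-by-term integration, which I would settle via the uniform estimate above, noting that the absolute series $\sum_{k}\frac{\Gamma(k+n)}{\Gamma(n)k!}||z||^k$ is finite for $||z||<1$ and dominates the integrand independently of $\zeta$, so that dominated convergence (with $\mu$ finite) applies on each compact subset of $\BB$. I expect no essential obstacle beyond this interchange and keeping the cancellation of Gamma factors straight.
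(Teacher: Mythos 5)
Your proposal is correct and takes essentially the same route as the paper, which likewise expands the Cauchy kernel via the binomial and multinomial series to read off the coefficient $\frac{\Gamma(k+n)}{\Gamma(n)j!}\bar{\mu}^*(j)$ of $z^j$ and then evaluates the coefficient-defined norm \eqref{norm with sum} in $D_{-\alpha}(\BB)$; your version merely supplies the factorial cancellation and the justification of term-by-term integration that the paper leaves implicit. One pedantic remark: since $\langle z,\bar{\zeta}\rangle=z_1\zeta_1+\dots+z_n\zeta_n$, the expansion literally produces $\mu^*(j)=\int_{\mathbb S_n}\zeta^j\,d\mu(\zeta)$ rather than $\bar{\mu}^*(j)$, but for a positive measure $|\mu^*(j)|=|\bar{\mu}^*(j)|$, so the asserted asymptotic is unaffected.
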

	
	The following lemma is crucial in the sequel. It is probably known, but we were not able to locate it in the literature, and hence we include its proof.
	\begin{lemma}\label{estimate with j}
		Let $j_1,...,j_n,k$ be non-negative integers satisfying  $j_1+...+j_n=nk.$ Then
		$$j_1!\cdots j_n!\geq (k!)^n.$$
		\begin{proof}
			The $\Gamma$-function is logarithmically convex, and hence, we may apply the Jensen inequality to it:
			$$\log \Gamma\Big(\frac{x_1}{n}+...+\frac{x_n}{n}\Big)\leq \frac{\log \Gamma(x_1)}{n}+...+\frac{\log \Gamma(x_n)}{n}.$$
			Set $x_i:=j_i+1,$ $i=1,...,n.$ Since $j_1+...+j_n=nk,$ the assertion follows.
		\end{proof}
	\end{lemma}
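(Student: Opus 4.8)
The plan is to read the inequality as an extremal statement: among all $n$-tuples $(j_1,\dots,j_n)$ of non-negative integers with $j_1+\dots+j_n=nk$, the product $j_1!\cdots j_n!$ is smallest precisely at the balanced tuple $j_1=\dots=j_n=k$, where it equals $(k!)^n$. Thus the task reduces to showing that the balanced point minimizes the product, and the mechanism behind this is the convexity of $m\mapsto\log(m!)$, equivalently the logarithmic convexity of the $\Gamma$-function on $(0,\infty)$, which is classical.

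My primary route would be a direct application of Jensen's inequality to the convex function $g(x)=\log\Gamma(x)$, evaluated at the points $x_i:=j_i+1$ with equal weights $1/n$. The constraint gives that the arithmetic mean of the arguments is $\frac1n\sum_{i=1}^{n}(j_i+1)=k+1$, so convexity yields
$$\log\Gamma(k+1)\leq \frac1n\sum_{i=1}^{n}\log\Gamma(j_i+1).$$
Since $\Gamma(m+1)=m!$ for non-negative integers $m$, this reads $\log(k!)\leq\frac1n\log(j_1!\cdots j_n!)$, and exponentiating produces $(k!)^n\leq j_1!\cdots j_n!$, as desired.

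An alternative, entirely elementary route avoids the $\Gamma$-function by a smoothing argument. The admissible tuples form a finite set, so the product attains a minimum. At a minimizer, if two coordinates satisfied $j_a\geq j_b+2$, then replacing $(j_a,j_b)$ by $(j_a-1,j_b+1)$ preserves the sum and multiplies the product by the factor $(j_b+1)/j_a<1$, contradicting minimality. Hence at a minimizer any two coordinates differ by at most one; since $\sum_i j_i=nk$ forces the average to be the integer $k$, all coordinates must equal $k$, and the minimal value is $(k!)^n$.

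There is no serious obstacle here; the single point requiring care is the convexity input. In the Jensen route one cites that $\log\Gamma$ is convex on the positive reals --- or, since only integer arguments occur, that the finite difference $\log((m+1)!)-\log(m!)=\log(m+1)$ is nondecreasing, which is exactly discrete convexity of $m\mapsto\log(m!)$. In the smoothing route the only subtlety is that a tuple with all coordinates within one of each other and summing to $nk$ must be balanced, which is immediate because the average $k$ is an integer.
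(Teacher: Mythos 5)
Your primary argument is exactly the paper's proof: apply Jensen's inequality to the convex function $\log\Gamma$ at the points $x_i=j_i+1$ with equal weights, note the mean is $k+1$, and exponentiate; you merely spell out the steps the paper leaves implicit. The elementary smoothing argument you add as an alternative is a valid bonus, but your main route is the same as the paper's, and it is correct.
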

	
	We may identify non-cyclicity for model polynomials via Cauchy transforms.
	\begin{lemma}
		Consider the polynomial $p(z)=1-m^{m/2} z_1\cdots z_m,$ where $1\leq m\leq n.$ Then $p$ is not cyclic in $D_\alpha(\BB)$ whenever $\alpha>\frac{2n+1-m}{2}.$
		\begin{proof}
			Recall that the model polynomials vanish in the closed unit ball along analytic sets of the form:
			$$\mathcal{Z}(p)\cap \mathbb{S}_n=\{1/\sqrt{m}(e^{i\theta_1},..,e^{i\theta_{m-1}}, e^{-i(\theta_1+...+\theta_{m-1})},0,..,0):\theta_i\in \mathbb{R}\}.$$
			It is easy to see that for a proper measure $\mu$, $\mu^*(j)$ is non-zero when $mj_m=k$ and $\mu^*(j)\asymp m^{-k/2}.$ By Stirling's formula and Lemma~\ref{estimate with j} we get that
			\begin{align*}
				||C_{[\mu]}||_{-\alpha}^2&\leq  C \sum_{k=0}^{\infty}\frac{(mk+1)^{n-1-\alpha}(mk)!}{(k!)^mm^{mk}}\asymp \sum_{k=0}^{\infty}(k+1)^{1/2(2n-m-1)-\alpha}.
			\end{align*}
			Thus, $p$ is not cyclic in $D_\alpha(\BB)$ for $\alpha>\frac{2n+1-m}{2}.$
		\end{proof}
	\end{lemma}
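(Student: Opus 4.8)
The plan is to exhibit, via the Brown--Shields scheme recorded above, a positive Borel measure $\mu$ carried by the boundary zero set $\mathcal{Z}(p^*)\cap \mathbb S_n$ whose Cauchy transform $C_{[\mu]}$ lies in $D_{-\alpha}(\BB)$; by the criterion stated just before Lemma~\ref{Cauchy norm}, this forces $[p]\neq D_\alpha(\BB)$ and hence non-cyclicity. The natural candidate is the push-forward of the normalized Lebesgue measure on the $(m-1)$-torus under the parametrization
$$(\theta_1,\dots,\theta_{m-1})\mapsto \tfrac{1}{\sqrt m}\big(e^{i\theta_1},\dots,e^{i\theta_{m-1}},e^{-i(\theta_1+\dots+\theta_{m-1})},0,\dots,0\big)$$
of the analytic zero set displayed in the statement. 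This is a finite positive Borel measure supported on $\mathcal{Z}(p)\cap \mathbb S_n$.

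First I would compute the moments $\mu^*(j)=\int_{\mathbb S_n}\zeta^j\,d\mu(\zeta)$. Since the last $n-m$ coordinates vanish on the support, $\mu^*(j)=0$ unless $j_{m+1}=\dots=j_n=0$; and on the support one has $\zeta^j=m^{-|j|/2}\exp\big(i\sum_{i=1}^{m-1}(j_i-j_m)\theta_i\big)$, so integrating in $\theta$ leaves only the \emph{diagonal} indices $j=(l,\dots,l,0,\dots,0)$ with $m$ entries equal to $l$, for which $|j|=ml=:k$ and $\mu^*(j)=m^{-k/2}$. As $\mu$ is real and positive, $\bar\mu^*(j)=\overline{\mu^*(j)}=m^{-k/2}$ as well. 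Feeding this into the formula of Lemma~\ref{Cauchy norm}, every off-diagonal term drops out and the double sum collapses to the single series
$$\|C_{[\mu]}\|_{-\alpha}^2\asymp \sum_{l=0}^{\infty}(ml+1)^{n-1-\alpha}\,\frac{(ml)!}{(l!)^m\,m^{ml}},$$
in which the factor $k!/j!=(ml)!/(l!)^m$ is the central multinomial coefficient.

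The remaining point is the asymptotics of the general term. Applying the Stirling formula to $(ml)!$ and to each $l!$, the central multinomial coefficient satisfies $(ml)!/(l!)^m\asymp m^{ml}\,l^{(1-m)/2}$, so the powers of $m$ cancel and
$$\|C_{[\mu]}\|_{-\alpha}^2\asymp \sum_{l=0}^{\infty}(l+1)^{\,n-1-\alpha+\frac{1-m}{2}}=\sum_{l=0}^{\infty}(l+1)^{\frac{2n-m-1}{2}-\alpha}.$$
This series converges precisely when its exponent is strictly below $-1$, that is, when $\alpha>\frac{2n-m-1}{2}+1=\frac{2n+1-m}{2}$. For such $\alpha$ we conclude $C_{[\mu]}\in D_{-\alpha}(\BB)$, and the Brown--Shields criterion yields the non-cyclicity of $p$.

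I expect the main obstacle to be the bookkeeping of the second step: verifying rigorously that the only surviving moments are the diagonal ones and pinning down the exact exponent of $(l+1)$, since any miscount there shifts the convergence threshold. The Stirling estimate for the central multinomial coefficient is routine but must be carried out carefully, and Lemma~\ref{estimate with j} can be invoked to control the factorials cleanly. The threshold then falls out exactly at $\alpha=\frac{2n+1-m}{2}$, complementing the cyclicity range of Theorem~\ref{Thm characterization of polynom} and showing that the two results are sharp.
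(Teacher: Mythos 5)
Your proposal is correct and follows essentially the same route as the paper: the same measure on the torus parametrizing $\mathcal{Z}(p)\cap\mathbb S_n$ (made explicit where the paper says ``a proper measure''), the same collapse of the moments $\mu^*(j)$ to the diagonal indices $j=(l,\dots,l,0,\dots,0)$ with $\mu^*(j)=m^{-k/2}$, $k=ml$, and the same Stirling analysis of the central multinomial coefficient $(ml)!/(l!)^m$ in the norm formula of Lemma~\ref{Cauchy norm}, yielding the series $\sum_l (l+1)^{(2n-m-1)/2-\alpha}$ and the threshold $\alpha>\frac{2n+1-m}{2}$. The only cosmetic difference is that you obtain two-sided asymptotics directly from Stirling, whereas the paper pairs Stirling with the factorial bound of Lemma~\ref{estimate with j} to state an upper estimate.
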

	
	We consider Riesz $\alpha$-capacity for a fixed parameter $\alpha\in(0,n)$ with respect to the \emph{anisotropic distance} in $\mathbb S_n$ given by 
	$$d(\zeta,\eta)=|1-\langle \zeta,\eta \rangle|^{1/2}$$
	and the non-negative \emph{kernel} $K_\alpha:(0,\infty)\rightarrow [0,\infty)$ given by
	
	$$K_\alpha(t)=
	\begin{dcases}
		t^{\alpha-n}, & \alpha\in(0,n) \\
		\log(e/t), & \alpha=n
	\end{dcases}
	.$$
	Note that we may extend the definition of $K$ to $0$ by defining $K(0):=\lim_{t\rightarrow 0^+}K(t).$
	
	Let $\mu$ be any Borel probability measure supported on some Borel set $E\subset \mathbb{S}_n.$ Then the  $\emph{Riesz}$ $\alpha$-$\emph{energy}$ of $\mu$ is given by
	$$I_\alpha[\mu]=\iint_{\mathbb S_n}K_\alpha(|1-\langle \zeta,\eta \rangle|)d\mu(\zeta)d\mu(\eta)$$
	and the  $\emph{Riesz}$ $\alpha$-$\emph{capacity}$ of $E$ by
	$$\mathrm{cap}_\alpha(E)=\inf\{I_\alpha[\mu]:\mu\in \mathcal{P}(E)\}^{-1},$$
	where $\mathcal{P}(E)$ is the set of all Borel probability measures supported on $E.$ Note that if $\text{cap}_\alpha(E)>0,$ then there exist at least one probability measure supported on $E$ having finite Riesz $\alpha$-energy. Moreover, any $f\in D_\alpha(\BB)$ has finite radial limits $f^*$ on $\mathbb{S}_n,$ except possibly, on a set $E$ having $\text{cap}_\alpha(E)=0.$ Theory regarding to the above standard construction in potential theory can be found in \cite{Ahern}, \cite{Cohn}, \cite{Fallah-Tomas}, \cite{Pestana}. 
	
	The relation between non-cyclicity of a function and the Riesz $\alpha$-capacity of the zeros of its radial limits follows.
	\begin{theorem}\label{Capacity theorem non cyclic}
		Fix $\alpha\in (0,n]$ and let $f\in D_\alpha(\BB).$ If $\mathrm{cap}_\alpha(\mathcal{Z}(f^*))>0,$ then $f$ is not cyclic in $D_\alpha(\BB).$
		\begin{proof}
			Let $\mu$ be a probability measure supported in $\mathcal{Z}(f^*),$ with finite Riesz $n$-energy. If $r\in (0,1),$ then
			\begin{align*} 
				\log\frac{e}{|1-r\langle \zeta,\eta\rangle|}&=1+\textrm{Re}\Big(\log\frac{1}{1-r\langle \zeta,\eta\rangle}\Big)\\
				&=1+\textrm{Re} \sum_{k=1}^{\infty}\sum_{|j|=k}\frac{r^kk!}{kj!}\zeta^j\overline{\eta}^j.
			\end{align*}
			Note that $\mu$ is a probability measure and hence
			\begin{align*}
				\iint_{\mathbb S_n}\log\frac{e}{|1-r\langle \zeta,\eta\rangle|}d\mu(\zeta)d\mu(\eta)
				=1+ \sum_{k=1}^{\infty}\sum_{|j|=k}\frac{r^kk!}{kj!}|\mu^*(j)|^2.
			\end{align*}
			Since $|1-w|/|1-rw|\leq 2$ for $r\in (0,1)$ and $w\in \overline{\DD},$ the dominated convergence theorem and Lemma~\ref{Cauchy norm} give
			$$||C_{[\mu]}||^2_{-n}\asymp\sum_{k=1}^{\infty}\sum_{|j|=k}\frac{k!}{kj!}|\mu^*(j)|^2<\infty.$$
			The assertion follows.
			
			We continue setting a probability measure $\mu,$ supported in $\mathcal{Z}(f^*),$ with finite Riesz $\alpha$-energy, where $\alpha\in (0,n).$ If $r\in (0,1),$ then
			\begin{align*}
				\frac{1}{(1-r\langle \zeta,\eta\rangle)^{n-\alpha}}&=\sum_{k=0}^{\infty}\sum_{|j|=k}\frac{\Gamma(k+n-\alpha)k!r^k}{k!\Gamma(n-\alpha)j!}\zeta^j\overline{\eta}^j.
			\end{align*}
			Similar arguments to the one above show that
			\begin{align*}
				I_\alpha[\mu]&\geq\Big|\iint_{\mathbb S_n}\textrm{Re}\Big(\frac{1}{(1-r\langle \zeta,\eta\rangle)^{n-\alpha}}\Big)d\mu(\zeta)d\mu(\eta)\Big|\\
				&=\Big|\sum_{k=0}^{\infty}\sum_{|j|=k}\frac{\Gamma(k+n-\alpha)k!r^k}{k!\Gamma(n-\alpha)j!}\iint_{\mathbb S_n}\zeta^j\overline{\eta}^jd\mu(\zeta)d\mu(\eta)\Big|\\
				&\asymp \sum_{k=0}^{\infty}\sum_{|j|=k}\frac{(k+1)^{n-1-\alpha}k!}{j!}r^k|\mu^*(j)|^2.
			\end{align*}
			Again, letting $r\rightarrow 1^{-}$ by Lemma~\ref{Cauchy norm} we obtain that $C_{[\mu]}\in D_{-\alpha}(\BB).$ The assertion follows.
		\end{proof}
	\end{theorem}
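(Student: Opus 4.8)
The plan is to produce from the capacity hypothesis a single nonzero measure whose Cauchy transform lies in $D_{-\alpha}(\BB)$, and then to invoke the Brown--Shields scheme recorded in \cite{Sola}, \cite{Brown Shields}. Since $\mathrm{cap}_\alpha(\mathcal{Z}(f^*))>0$, the definition of capacity furnishes a probability measure $\mu\in\mathcal{P}(\mathcal{Z}(f^*))$ with finite Riesz $\alpha$-energy $I_\alpha[\mu]<\infty$. As $\mu$ is supported on the zero set of the radial limits $f^*$, the cited scheme reduces non-cyclicity of $f$ to the single assertion $C_{[\mu]}\in D_{-\alpha}(\BB)$; so the whole matter comes down to bounding $||C_{[\mu]}||_{-\alpha}^2$ by the energy $I_\alpha[\mu]$.

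To forge this link I would regularize with a parameter $r\in(0,1)$ and expand the relevant kernel evaluated at the argument $|1-r\langle\zeta,\eta\rangle|$ as a power series in $\langle\zeta,\eta\rangle$. The multinomial identity $\langle\zeta,\eta\rangle^k=\sum_{|j|=k}\tfrac{k!}{j!}\zeta^j\bar{\eta}^j$ then turns each double integral $\iint(\cdots)\,d\mu(\zeta)d\mu(\eta)$ into a moment series, since $\iint\zeta^j\bar{\eta}^j\,d\mu(\zeta)d\mu(\eta)=|\mu^*(j)|^2$, these terms being real and nonnegative. For $\alpha\in(0,n)$ I would use the binomial series for $(1-r\langle\zeta,\eta\rangle)^{-(n-\alpha)}$, and for $\alpha=n$ the expansion $\log\frac{e}{|1-r\langle\zeta,\eta\rangle|}=1+\mathrm{Re}\,\log\frac{1}{1-r\langle\zeta,\eta\rangle}$ together with the Taylor series of the logarithm. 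Matching coefficients through the $\Gamma$-asymptotics $\Gamma(k+n-\alpha)/k!\asymp(k+1)^{n-\alpha-1}$ identifies the resulting series, up to two-sided constants, with the expression for $||C_{[\mu]}||_{-\alpha}^2$ supplied by Lemma~\ref{Cauchy norm}.

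The crux is the passage $r\to 1^-$, where the regularized moment series must be controlled by the genuine energy $I_\alpha[\mu]$. The decisive tool is the geometric inequality $|1-w|\leq 2|1-rw|$, valid for $w\in\overline{\DD}$ and $r\in(0,1)$; since $\langle\zeta,\eta\rangle\in\overline{\DD}$ for $\zeta,\eta\in\mathbb S_n$, it yields the uniform-in-$r$ bounds needed in both cases. For $\alpha\in(0,n)$ it gives $|1-r\langle\zeta,\eta\rangle|^{\alpha-n}\leq 2^{n-\alpha}|1-\langle\zeta,\eta\rangle|^{\alpha-n}$, whence $\iint|1-r\langle\zeta,\eta\rangle|^{\alpha-n}\,d\mu(\zeta)d\mu(\eta)\leq 2^{n-\alpha}I_\alpha[\mu]$, and since the moment series has nonnegative terms one lets $r\to1^-$ by monotone convergence to conclude that $\sum_{k}\sum_{|j|=k}(k+1)^{n-1-\alpha}\tfrac{k!}{j!}|\mu^*(j)|^2$ is bounded by a constant multiple of $I_\alpha[\mu]$, hence finite. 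For $\alpha=n$ the same inequality dominates $\log\frac{e}{|1-r\langle\zeta,\eta\rangle|}$ by the $I_n[\mu]$-integrable majorant $\log 2+\log\frac{e}{|1-\langle\zeta,\eta\rangle|}$, so the dominated convergence theorem pushes the limit inside the integral. In either case Lemma~\ref{Cauchy norm} now gives $C_{[\mu]}\in D_{-\alpha}(\BB)$, and the Brown--Shields argument finishes the proof. I expect the main obstacle to be precisely this uniform control of the kernel's singularity under regularization; once the inequality $|1-w|\leq 2|1-rw|$ is exploited, everything else is bookkeeping with the $\Gamma$-asymptotics and Stirling's formula.
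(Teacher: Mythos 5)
Your proposal is correct and follows essentially the same route as the paper: extract a finite-energy probability measure from $\mathrm{cap}_\alpha(\mathcal{Z}(f^*))>0$, expand the kernel at the regularized argument $1-r\langle\zeta,\eta\rangle$ into a moment series via the multinomial identity, control the passage $r\to1^-$ with the inequality $|1-w|\leq 2|1-rw|$, and conclude $C_{[\mu]}\in D_{-\alpha}(\BB)$ via Lemma~\ref{Cauchy norm} and the Brown--Shields scheme. If anything, you are slightly more careful than the paper in making the limit passage explicit (monotone convergence on the nonnegative moment series for $\alpha\in(0,n)$, the majorant $\log 2+\log\frac{e}{|1-\langle\zeta,\eta\rangle|}$ for $\alpha=n$, and the constant $2^{n-\alpha}$ that the paper's displayed inequality $I_\alpha[\mu]\geq|\iint\mathrm{Re}(\cdots)|$ tacitly absorbs).
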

	
	\begin{remark}
		According to \cite{Kosinski Vav} one can expect that the cyclicity problem of polynomials in the unit ball of $\CC^n$ depends on the real dimension of their zero set restricted on the unit sphere: $\text{dim}_{\RR}(\mathcal{Z}(p)\cap \mathbb S_n).$
		
		Let us point out the nature of the boundary zeros of a polynomial non-vanishing in the ball. See \cite{Kosinski Vav} for the two dimensional case where had been used the Curve Selection Lemma of \cite{Selection lemma}.
		
		Let $p\in \CC[z_1,...,z_n]$ be a polynomial non-vanishing in the ball. Looking at $\mathcal{Z}(p)\cap \mathbb S_n$ as at a semi-algebraic set, we conclude that it is the disjoint union of a finite number of Nash manifolds $M_i,$ each Nash diffeomorphic to an open hypercube $(0,1)^{\textrm{dim}(M_i)}.$ Note that the Nash diffeomorphisms over the closed field of the real numbers satisfy some additional properties (see \cite{real algebraic geom}, Proposition 2.9.10). 
		
		One can expect then that the characterization of cyclicity and the nature of the boundary zeros of the model polynomials, as well as, the unitary invariance of the Dirichlet norm and the sufficient capacity condition, will be crucial in the characterization of cyclic polynomials in arbitrary dimension.
	\end{remark}
	
	{\bf Acknowledgments.}  I would like to thank \L{}. Kosi\'{n}ski for the helpful conversations during the preparation of the present work. I would like to thank also the anonymous referee for numerous remarks that substantially improved the shape of the paper.

\end{document}